\documentclass[11pt]{amsart}
\usepackage{amsfonts}
\usepackage{amsmath}
\usepackage{amssymb, esint}
\usepackage{amsthm,color}
\usepackage{enumerate}
\usepackage{mathtools}

\usepackage[normalem]{ulem}

\usepackage{cancel}
\usepackage{longtable}

\usepackage{geometry}
\usepackage[hidelinks, bookmarksdepth=3]{hyperref}

\newcommand{\R}{{\mathbb R}}

\newcommand{\N}{{\mathbb N}}

\newcommand{\cE}{{\mathcal E}}

\newcommand{\cH}{{\mathcal H}}

\newcommand{\Ss}{{\mathbb S}}

\def\0{{\mathbf 0}}

\def\0{{\mathbf 0}}
\def\loc{{\textup{loc}}}
\def\id{{\rm id}}

\def \mb{\mathbb}

\def\loc{{\textup{loc}}}

\newcommand{\e}{\varepsilon}
\newcommand{\vp}{\varphi}
\newcommand{\osc}{\operatornamewithlimits{osc}}

\newcommand{\supp}{\operatorname{supp}}

\newcommand{\diam}{\operatorname{diam}}
\newcommand{\dist}{\operatorname{dist}}

\newcommand{\Hess}{\operatorname{Hess}}

\newcommand{\esssup}{\operatornamewithlimits{ess\,sup}}

\newcommand\norm[1]{\left\Arrowvert {#1} \right\Arrowvert}

\theoremstyle{plain}
\newtheorem{thm}{Theorem}[section]

\newtheorem{cor}[thm]{Corollary}
\newtheorem{lem}[thm]{Lemma}

\theoremstyle{definition}

\theoremstyle{remark}

\newtheorem*{claim*}{Claim}

\newtheorem{rem}[thm]{Remark}

\numberwithin{equation}{section}

\title[Constraint Maps ... Alt--Phillips case]{Partial regularity for minimizing constraint maps \\ for the Alt-Phillips energy}

\author[R.\ Ziganshina]{Rada Ziganshina}
\email{radaz@kth.se@kth.se}
\address{Department of Mathematics, KTH Royal Institute of Technology, Sweden}

\begin{document}

\begin{abstract}
In this paper, we establish an  \texorpdfstring{$\varepsilon$-}regularity theorem for minimizers of an Alt-Phillips type functional subject to constraint maps. We prove  that under sufficiently small energy, the minimizers exhibit regularity, and hence  proving the smoothness of these maps. From here, we  bootstrap to optimal regularity.
\end{abstract}

\maketitle
\setcounter{tocdepth}{1}



\section{Introduction}\label{sec:intro}

\subsection{Background}

The problem of partial regularity for  constraint maps was first investigated in \cite{D}, and has been recently revisited and further developed  (\cite{FKS}, \cite{FGKS1}, \cite{FGKS2}). This renewed interest has led to studies of regularity of the projection and distance maps, the structure of the free boundary, and possible singularities occurring on or near the free boundary. These aspects have been primarily studied in the context of the classical Dirichlet energy, with some extensions to the Alt–Caffarelli functional \cite{FGKS1}. In this work, we initiate an investigation for functionals with source terms, related to 
the scalar Alt–Phillips energy.
It should be noted that the results in \cite{GG, GG2}, where the Alt–Phillips energy and a broader class of functionals were analyzed in vectorial settings, were obtained in the absence of constraints.

To formulate our problem,  let $\Omega$ be a bounded domain in $\R^n$, $n \geq 1$, and consider a smooth\footnote{The smoothness of $\partial M$ can be made exact, depending on $\gamma$, in the functional, and the analysis in this paper. However, we assume it is $C^\infty$ for simplicity}
domain $M$ in $\R^m$, $m \geq 2$ with compact, connected complement. Throughout this paper, we shall write by $\rho$ the distance function to $M^c$. Consider the Alt-Phillips type energy,
\begin{equation*}
\cE_{\lambda,\gamma}[u]= \int_{\Omega} \frac{1}{2}|Du|^2 + {\lambda}(\rho\circ u)^\gamma\,dx,\quad \lambda >  0, \,\gamma \in (0,2). 
\end{equation*}
We shall call $u\in W^{1,2}(\Omega;\overline M)$ a minimizing constraint map for $\cE_{\lambda,\gamma}$, if 
$$
\cE_{\lambda,\gamma}[u]\leq \cE_{\lambda,\gamma}[v],\quad\forall v\in W_u^{1,2}(\Omega;\overline M). 
$$
The case with $\lambda>0$, $\gamma = 0$ corresponds to Alt-Caffarelli energy, which is treated in \cite{FGKS1}, while the case with $\lambda = 0$ (hence any $\gamma$) corresponds to the Dirichlet energy, which is treated in \cite{FGKS2}. 

Our main result Theorem \ref{thm:e-reg} below, is an $\varepsilon$-regularity theorem, which  
states that if the energy $\cE_{\lambda,\gamma}$ for a minimizer  is small, then the minimizer is  smooth to an optimal degree.

In our analysis, we encountered two main difficulties compared to previous approaches. 
First, techniques based solely on constraint maps were insufficient; the additional term 
in the Alt--Phillips functional introduces a component in the Euler--Lagrange equation 
that is difficult to control for $\gamma \in (0,1)$, yet is crucial for the regularity 
estimates. Second, while the results in \cite{GG2} address the Alt--Phillips term, 
they do not account for the constraints essential to our setting.

\begin{thm}[\texorpdfstring{$\e$-regularity}{e-regularity}]
\label{thm:e-reg}
Let $u\in W^{1,2}(\Omega;\overline M)$ be a minimizing constraint map for $\cE_{\lambda,\gamma}$ with $\rho\circ u\leq d$ a.e.\ in a ball $B_{2R}(x_0)\subset\Omega$. For every $\sigma\in(0,1)$, there are constants $\e > 0$ small and $c > 1$ large, all depending at most on $n$, $m$, $\partial M$, $\lambda$,  $\gamma$, and $\sigma$, such that if 
$$
(2R)^{2-n} \int_{B_{2R}(x_0)} |Du|^2\,dx \leq \e \quad\text{and}\quad R^2\lambda d^\gamma \leq \e,
$$
then $u\in C^{1,\alpha}(B_R(x_0);\overline M)$ with $\alpha = \min\{\frac{\gamma}{2-\gamma},\sigma\}$ and 
    $$
    R\| Du \|_{L^\infty(B_R(x_0))} + R^{1+\alpha} [ Du ]_{C^{0,\alpha}(B_R(x_0))}\leq c.
    $$
Moreover, if $\gamma\in[1,2)$, and $d \leq d_0$ where $d_0$ is such that $\rho\in C^\infty(B_{d_0}(\partial M))$, one has $u\in C^{1,1}(B_R(x_0);\overline M)$ and 

$$
\| D^2 u\|_{L^\infty(B_R(x_0))} \leq c\norm{Du}_{L^\infty(B_{2R}(x_0))}^2 + \frac{c}{R^2}\lambda \gamma (d_0)^{\gamma-1}
$$

\end{thm}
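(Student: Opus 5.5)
By scaling we may take $x_0=0$ and $R=1$. The rescaled map $u_R(x)=u(Rx)$ minimizes $\cE_{R^2\lambda,\gamma}$ with the same constraint. The hypotheses then say that the normalized Dirichlet energy is at most $\e$ and that the potential $R^2\lambda(\rho\circ u)^\gamma$ is bounded by $\e$ on $B_2$. So the source term is a small perturbation, and the plan follows the scheme of the Dirichlet case \cite{FGKS2}: decay of energy, then $C^{0,\beta}$, then $C^{1,\alpha}$, then $C^{1,1}$.

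\textbf{Step 1 (Morrey decay).} I will show that for some $\theta\in(0,1/4)$ and all balls $B_r(y)\subset B_{3/2}$,
$$\Phi(y,\theta r)\le \tfrac12\Phi(y,r)+C r^{2}\lambda d^\gamma,\qquad \Phi(y,r)=r^{2-n}\int_{B_r(y)}|Du|^2,$$
provided $\Phi(y,r)+r^2\lambda d^\gamma$ is small. This comes from comparison with a competitor. Let $h$ be the harmonic replacement of $u$ in $B_r(y)$. By smallness of energy and Poincar\'e, most of $u(B_r)$ lies in a small neighborhood. If that neighborhood stays a definite distance inside $M$, the constraint is inactive. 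Otherwise $u$ is close to $\partial M$, and we use the nearest-point projection $\Pi$ onto $\overline M$, which is Lipschitz near $\partial M$, applied to a Luckhaus/Simon-type interpolation. Minimality bounds $\int|Du|^2-\int|D(\Pi\circ h)|^2$ by $\lambda\int(\rho\circ v)^\gamma\le C\lambda d^\gamma r^n$, since $\rho\circ v\le d+o(1)$. Iterating gives $\Phi(y,r)\le Cr^{2\beta}$, hence $u\in C^{0,\beta}(B_{3/2})$ by Morrey's lemma.

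\textbf{Step 2 (Euler--Lagrange and $C^{1,\alpha}$).} Once $u$ is continuous and its image is localized near a point of $\overline M$, outer variations $u+t\phi$ (projected back to $\overline M$) give the variational inequality
$$-\Delta u=\mu\,\nu(u)\,\chi_{\{u\in\partial M\}}\;-\;\tfrac{\lambda\gamma}{2}(\rho\circ u)^{\gamma-1}\nabla\rho(u)\chi_{\{u\in M\}},$$
with $\mu\ge0$ controlled by $|Du|^2$ times the curvature of $\partial M$, as in the constraint-map literature. Here $\nu$ is the unit normal to $\partial M$ and the convention is as in \cite{FGKS2}. The constraint part is quadratic in $Du$. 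With small energy and Step 1, Schauder/Campanato bootstrapping gives $Du\in L^\infty$ and then $C^{0,\sigma}$ as in \cite{FGKS2}.

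The source term is the main obstacle. For $\gamma<1$ it is singular near $\{\rho\circ u=0\}$ and is not in any $L^p$ a priori. To handle it I will use the scalar function $w=\rho\circ u$, which satisfies $\Delta w \approx \lambda\gamma w^{\gamma-1}/2+O(|Du|^2)$ in $\{w>0\}$, with $w\ge0$. This is an Alt--Phillips-type inequality. The barrier and nondegeneracy arguments of \cite{GG2} then give optimal growth $w\le C\,\dist(x,\{w=0\})^{2/(2-\gamma)}$. Consequently $w^{\gamma-1}\lesssim\dist^{-(2-2\gamma)/(2-\gamma)}$, which is integrable in Campanato norms of order $\alpha=\gamma/(2-\gamma)$. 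Applying this in the equation yields $Du\in C^{0,\alpha}$ with $\alpha=\min\{\gamma/(2-\gamma),\sigma\}$, together with the scaled bound $R\|Du\|_\infty+R^{1+\alpha}[Du]_\alpha\le c$.

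\textbf{Step 3 ($\gamma\ge1$).} When $\gamma\ge1$ and $d\le d_0$, the source $\lambda\gamma\rho^{\gamma-1}\nabla\rho(u)$ is bounded by $\lambda\gamma d_0^{\gamma-1}$, since $\rho\in C^\infty$ on $B_{d_0}(\partial M)$. The constraint term is bounded by $C|Du|^2$, because $\mu$ is at most the second fundamental form applied to $Du$. So $|\Delta u|\le C\|Du\|_\infty^2+C\lambda\gamma d_0^{\gamma-1}$ in $B_{2R}$. To pass from $\Delta u\in L^\infty$ to $D^2u\in L^\infty$, we need to handle the jump of the right-hand side across $\{u\in\partial M\}$. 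For this we follow the $C^{1,1}$ argument of \cite{FGKS2}: apply the obstacle-problem estimate to $\rho\circ u$ and the tangential components separately. After rescaling this gives the stated $C^{1,1}$ bound.
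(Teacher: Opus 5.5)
The gap is in Step 2 for $\gamma\in(0,1)$, which is exactly where the source term is singular. Your scaling, your Step 1 (which is Luckhaus' theorem and is best cited from \cite{L}, as the paper does in Theorem~\ref{thm:e-reg-Ca}) and your Step 3 (as in Lemma~\ref{lem:C1a-sup}) agree with the paper. For $\gamma\ge1$ the source is bounded, so Step 2 is reasonable there as well.

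\textbf{What fails for $\gamma<1$.} First, the inference from $w\le C\dist(x,\{w=0\})^{2/(2-\gamma)}$ to $w^{\gamma-1}\lesssim\dist(x,\{w=0\})^{-(2-2\gamma)/(2-\gamma)}$ runs the wrong way. Since $\gamma-1<0$, an upper bound on $w$ gives a \emph{lower} bound on $w^{\gamma-1}$. A pointwise upper bound on $w^{\gamma-1}$ would need $w(x)\ge c\,\dist(x,\{w=0\})^{2/(2-\gamma)}$. That is much stronger than the usual sup-type nondegeneracy, and it is not available for $w=\rho\circ u$.

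Testing the $w$-equation with a cutoff and using the sign of the source does give $\int_{B_r(x_0)}\lambda\gamma w^{\gamma-1}\chi_{\{w>0\}}\lesssim r^{n-2}\sup_{B_{2r}(x_0)}w+\int_{B_{2r}(x_0)}|Du|^2$. But you did not write this, and it still presupposes the growth bound.

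Second, the growth bound itself is unsupported. The function $w$ does not minimize any scalar Alt--Phillips functional. Its equation contains the sign-indefinite term $\Hess\rho_u(Du,Du)$. For $\gamma<1$ the map $t\mapsto t^{\gamma-1}$ is decreasing, so there is no comparison principle on which to build barriers. Moreover, $w$ and $Dw$ vanish on $\partial\{w>0\}$, so the bound $w\le Cr^{1+\gamma/(2-\gamma)}$ at free boundary points is essentially the $C^{1,\gamma/(2-\gamma)}$ regularity you are trying to prove. The argument is therefore circular.

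The same circularity affects your claim that Campanato bootstrapping gives $Du\in L^\infty$. That step ignores the singular source, which at that stage is not known to lie in any Morrey space. Yet you need $Du\in L^\infty$ to control the $O(|Du|^2)$ term in the equation for $w$.

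\textbf{The missing idea.} The paper avoids the Euler--Lagrange system of $u$ altogether, following Giaquinta--Giusti.
\begin{enumerate}
\item On $B_R(x_0)$, compare $u$ with an $\cE_0$-minimizing constraint map $v$ with $v=u$ on $\partial B_R(x_0)$. This is used instead of a harmonic replacement, so the competitor respects the constraint, and $\rho\circ v\le d$ by subharmonicity of $\rho\circ v$.
\item Minimality of $u$ and the equation $\Delta v=A_v(Dv,Dv)\chi_{v^{-1}(\partial M)}$ give $\int_{B_R}|D(u-v)|^2\le2\lambda\int_{B_R}[(\rho\circ v)^\gamma-(\rho\circ u)^\gamma]+c\int_{B_R}|u-v||Dv|^2$.
\item The first term needs only $|(\rho\circ v)^\gamma-(\rho\circ u)^\gamma|\le c|u-v|^{\min\{\gamma,1\}}$. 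Sobolev and Young then turn it into $\frac12\int_{B_R}|D(u-v)|^2+cR^{n+2\min\{\gamma/(2-\gamma),1\}}$.
\item The second term is at most $\sup_{B_R}|u-v|\int_{B_R}|Du|^2$. The maximum principle of Lemma~\ref{lem:max} bounds $\sup|u-v|$ by the oscillation of $u$ (later of $u-\ell$) on $\partial B_R$, and Morrey decay controls $\int_{B_R}|Du|^2$.
\item Together with the Campanato decay \eqref{eq:camp} for $v$, this gives $C^{1,\alpha_1}$. Bootstrapping the bound on $\sup|u-v|$ raises the exponent to $\alpha=\min\{\gamma/(2-\gamma),\sigma\}$.
\end{enumerate}
The exponent $\gamma/(2-\gamma)$ thus comes from the Sobolev--Young scaling alone, with no growth or nondegeneracy information on $\rho\circ u$.

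Minor point: with the normalization $\frac12|Du|^2$, the source coefficient is $\lambda\gamma$, not $\lambda\gamma/2$.
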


With Theorem \ref{thm:e-reg} at hand, we establish an optimal partial regularity theorem for Alt-Phillips constraint maps, significantly improving Luckhaus' result \cite{L}

\begin{cor}
    \label{cor:partial}
    Let $u\in W^{1,2}\cap L^\infty(\Omega;\overline M)$ be a minimizing constraint map for $\cE_{\lambda,\gamma}$, where $\lambda > 0$, and $\gamma \in (0,2)$ are given constants. Then there is a relatively closed set $\Sigma\subset\Omega$, with $\dim_\cH(\Sigma) \leq n-3$, such that $u\in C_\loc^{1,\alpha}(\Omega\setminus\Sigma)$, where $\alpha = \min\{\frac{\gamma}{2-\gamma},\sigma\}$ for any $\sigma\in(0,1)$. Moreover, if $\gamma\geq 1$, one has $u\in C_\loc^{1,1}(\Omega\cap B_\delta(\partial u^{-1}(M))\setminus\Sigma)$ for some $\delta > 0$. 
\end{cor}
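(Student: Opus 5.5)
The corollary follows from Theorem~\ref{thm:e-reg} once we know that the small-energy hypothesis holds at every point outside a small closed set. We define the singular set by the usual density criterion,
$$
\Sigma := \Big\{ x\in\Omega : \liminf_{r\to 0} r^{2-n}\int_{B_r(x)}|Du|^2\,dx \geq \e \Big\},
$$
where $\e$ is the constant from Theorem~\ref{thm:e-reg} for the chosen $\sigma$.

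\textbf{Step 1: regularity off $\Sigma$.} Since $u\in L^\infty$, the image $u(\Omega)$ is bounded, so $\rho\circ u\leq d$ a.e.\ with $d<\infty$. Fix $x_0\notin\Sigma$ and choose $R$ so small that $R^2\lambda d^\gamma\leq\e$ and $(2R)^{2-n}\int_{B_{2R}(x_0)}|Du|^2\leq\e$. By absolute continuity of the integral, the second condition remains true at nearby centers after shrinking $R$ slightly, so it holds on a neighborhood of $x_0$. This shows that $\Sigma$ is relatively closed. Theorem~\ref{thm:e-reg} then gives $u\in C^{1,\alpha}$ near $x_0$, hence $u\in C^{1,\alpha}_\loc(\Omega\setminus\Sigma)$.

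\textbf{Step 2: the dimension bound.} The standard covering argument of Giusti--Federer type gives $\cH^{n-2}(\Sigma)=0$ immediately. To improve this to $\dim_\cH\Sigma\leq n-3$ we use Federer's dimension reduction, following Schoen--Uhlenbeck and Luckhaus. This requires three ingredients.
\begin{enumerate}[(i)]
\item \emph{Monotonicity.} An almost-monotonicity formula for $r^{2-n}\int_{B_r}\big(\tfrac12|Du|^2+\lambda(\rho\circ u)^\gamma\big)$, derived from domain variations. The potential term contributes only an error of order $r^2\lambda d^\gamma$, which vanishes under blow-up.
\item \emph{Compactness.} Strong $W^{1,2}_\loc$ compactness of the rescalings $u_r(x)=u(x_0+rx)$, obtained by a Luckhaus-type interpolation lemma adapted to the constraint $\overline M$. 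Under this rescaling the potential term scales away, so blow-up limits are homogeneous minimizing constraint maps for the pure Dirichlet energy, that is, tangent maps.
\item \emph{Upper semicontinuity of $\Sigma$.} Singular sets are upper semicontinuous under this convergence, which follows from Theorem~\ref{thm:e-reg} applied with $\lambda=0$.
\end{enumerate}
With these in hand, dimension reduction reduces the bound to a classification problem: homogeneous degree-zero minimizing constraint maps from $\R^2$, i.e.\ of the form $u(x)=\phi(x/|x|)$ with $\phi:\Ss^1\to\overline M$, must be constant. Such a $\phi$ is a minimizing geodesic-type curve, and degree-zero homogeneity gives energy $\int_{B_1}|D u|^2=\int_0^1 r^{-1}\,dr\int_{\Ss^1}|\phi'|^2=\infty$ unless $\phi$ is constant. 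This excludes singularities of codimension two.

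\textbf{Step 3: $C^{1,1}$ near the free boundary.} For $\gamma\geq1$, pick $\delta$ so that on $B_\delta(\partial u^{-1}(M))\setminus\Sigma$ we have $\rho\circ u\leq d_0$; this uses continuity of $u$ off $\Sigma$. The second part of Theorem~\ref{thm:e-reg} then applies locally and gives $C^{1,1}$.

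The main obstacle is the compactness step (ii) in the constrained setting. The constraint interacts with the interpolation in the Luckhaus lemma, and a competitor built there must be projected back into $\overline M$. Since $M^c$ is compact and smooth, I would use the nearest-point retraction onto $\overline M$ near $\partial M$. This map is Lipschitz, so it costs only a constant factor in energy, and the potential term changes by a controlled amount because $\rho^\gamma$ is Hölder continuous.
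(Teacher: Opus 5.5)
Your argument is correct. Steps 1 and 3 are what the paper does implicitly, but your Step 2 proves by hand a bound the paper simply cites.

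\textbf{Steps 1 and 3.} Once the normalized energy at $x_0$ is small, Theorem \ref{thm:e-reg} applies on a ball small enough that $R^2\lambda d^\gamma\le\e$. For $\gamma\ge1$, its second part applies wherever $\rho\circ u<d_0$ nearby.

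\textbf{Step 2, the paper's route.} The paper does not prove the dimension bound. It takes $\Sigma$ to be the singular set of Luckhaus' Theorem \ref{thm:e-reg-Ca}. That set is already closed, has $\dim_\cH\Sigma\le n-3$, and satisfies $r^{2-n}\int_{B_r(x_0)}|Du|^2\,dx\to0$ at every $x_0\in\Omega\setminus\Sigma$. So Theorem \ref{thm:e-reg} applies at every such point, and the corollary is immediate. Your density-defined set is contained in Luckhaus' set, so you could have stopped after Step 1.

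\textbf{Step 2, your route.} Re-deriving the bound by Federer--Schoen--Uhlenbeck dimension reduction is sound as sketched. The two-dimensional classification really only uses that a nonconstant $0$-homogeneous map has infinite energy on $B_1\subset\R^2$. This buys a self-contained proof and an explicit description of $\Sigma$. By the gap that Theorem \ref{thm:e-reg} produces, your set equals $\{x:\liminf_{r\to0}r^{2-n}\int_{B_r(x)}|Du|^2\,dx>0\}$. So your definition does not actually depend on $\sigma$, even though the threshold $\e$ does. The price is redoing the constrained compactness via Luckhaus' interpolation lemma with a retraction onto $\overline M$. That is exactly the part of \cite{L} the paper imports.

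\textbf{Two small corrections.}
\begin{enumerate}[(a)]
\item In (iii), the rescalings $u_k(x)=u(x_0+r_kx)$ minimize $\cE_{r_k^2\lambda,\gamma}$, not $\cE_0$, and the constants in Theorem \ref{thm:e-reg} depend on $\lambda$. So upper semicontinuity of their singular sets should come from applying Theorem \ref{thm:e-reg} to $u$ itself at scale $r_k$. The $\lambda=0$ result, Theorem \ref{thm:e-reg-clean}, is what handles tangent maps and their further blow-ups.
\item In Step 3, continuity of $u$ off $\Sigma$ gives a $\delta$ that is uniform only on compact subsets of $\Omega\setminus\Sigma$, via uniform continuity there. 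The paper is no more explicit on this point.
\end{enumerate}
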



\subsection{Notation}\label{sec:notation1}

For the reader's convenience, we gather here a list of notation to be used throughout this paper.

\begin{longtable}[l]{l l }
\,\,\,\,$\Omega$ &\qquad  a bounded domain in $\R^n$ ($n\geq 1$)  \medskip \\ 
\,\,\,\,$M$  &\qquad a $C^\infty $ domain in $\R^m$  ($m\geq 2$)  \medskip \\ 
\,\,\,\,$u$  &\qquad $(u^1, \cdots , u^m)$    \medskip \\
\,\,\,\,$W^{1,p}(\Omega;\overline M)$&\qquad $\{u\in W^{1,p}(\Omega;\R^m): u\in \overline M\text{ a.e.\ in }\Omega\}$      \medskip \\
\,\,\,\,$W^{1,p}_g(\Omega;\overline M)$&\qquad  the Dirichlet class $W^{1,p}(\Omega;\overline M)\cap (g+W^{1,p}_0(\Omega;\R^m))$, for $g\in W^{1,p}(\Omega;\overline M)$   \medskip \\
\,\,\,\,$\mb S^{m-1}$  &\qquad the unit sphere in $\R^m$ \medskip \\ 
\,\,\,\,$A_y (\cdot , \cdot)$  &\qquad     the second fundamental form of $\partial M$ at $y$   \medskip \\   
\,\,\,\,$B_d(E)$  &\qquad   the $d$-neighborhood of a set $E$, $\bigcup_{y\in E}\{x : |x-y| < d\}$
    \medskip\\  
\,\,\,\,$\rho$  &\qquad   signed distance function to $\partial M$, taking positive values in $M$   \medskip \\ 
 \,\,\,\,$d_0$  &\qquad   the width of the tubular neighborhood of $\partial M$ for which $\rho\in C^\infty(B_{d_0}(\partial M))$\medskip \\ 
\,\,\,\,$\nu$  &\qquad inward unit normal to $\partial M$ \medskip \\   
\,\,\,\,$\Pi$   &\qquad   $(\Pi^1, \cdots , \Pi^m)$, nearest point projection from $B_{d_0}(\partial M)$ onto $\partial M$
       \medskip\\ 
\,\,\,\,$\nu\otimes \nu$   &\qquad        the tensor product matrix with entries $(\nu^i\nu^j)_{ij}$ \medskip\\   
\,\,\,\,$\xi^\top$  &\qquad   $
  (I - \nu\otimes \nu)\xi
$,   \ orthogonal projection of $\xi$ into $T(\partial M)$      \medskip \\ 
\,\,\,\,$\cE_0[u]$  &\qquad $\int_\Omega \frac{1}{2}|Du|^2 \, dx$   \medskip \\ 
\,\,\,\,$\cE_{\lambda,\gamma}[u]$  &\qquad $\int_\Omega \frac{1}{2}|Du|^2 + \lambda(\rho\circ u)^\gamma \, dx$   \medskip \\ 
\,\,\,\,$\Sigma(u)$   &\qquad  $ \{ x\in\Omega: u\text{ is not continuous at }x\} $    \medskip \\   
\,\,\,\,$\cH^k$  &\qquad  $k$-dimensional Hausdorff measure    \medskip \\   
\,\,\,\,$\hbox{dim}_\cH$  &\qquad  Hausdorff dimension     \medskip\\    
\,\,\,\,$F:G$  &\qquad  $ f_\alpha^i g_\alpha^i$, \ where  $F = (f_\alpha^i)$ and  $G= (g_\alpha^i)$ are matrices in $\R^{m\times n}$    \medskip \\ 
\,\,\,\,$D^k$ &\qquad the $k$-th order differential operator in the ambient space, $D_\alpha := \frac{\partial}{\partial x_\alpha}$ \medskip \\
\,\,\,\,$\nabla^k$ &\qquad the $k$-th order differential operator in the target space, $\partial_i := \frac{\partial}{\partial y^i}$ \medskip \\
\,\,\,\,$\Hess f_y(\xi,\zeta)$  &\qquad $\partial_{ij} f(y)\xi_\alpha^i\zeta_\alpha^j$, for a function $f$ from the target space, and $\xi,\zeta \in \R^{m\times n}$ \medskip\\ 
\end{longtable}


\section{Preliminaries and basic tools}\label{sec:prelim}

Throughout this paper, we shall assume that $M$ is a $C^\infty$-domain in $\R^m$, with compact and connected complement. Here we assume the smoothness of $\partial M$ only for the sake of simplicity, even if most of our analysis works well for $\partial M$ of class $C^3$, or even $C^2$.  

Given a boundary datum $g\in W^{1,2}(\Omega;\overline M)$, the existence of minimizing constraint maps $u\in W_g^{1,2}(\Omega;\overline M)$ for $\cE_{\lambda,\gamma}$ follows from the direct method of calculus of variations. 

Of course, it is not a trivial matter to have $W^{1,2}(\Omega;\overline M)\neq\emptyset $ for $n = 2$ and it depends on the topology of $ M$ with regard  to $\Omega$.
However, $W^{1,2}(\Omega;\overline M)\neq\emptyset$ whenever $n\geq 3$; we refer interested  readers to \cite[Remark 2.9]{FGKS2} 
for the topology of the Sobolev class $W^{1,2}(\Omega;\overline M)$.

Throughout this paper, we shall consider minimizing constraint maps in $ W^{1,2}\cap L^\infty (\Omega;\overline M)$, which is not a restrictive condition as shown in the following lemma.

\begin{lem}\label{lem:bdd}
    Let $g\in W^{1,2}\cap L^\infty (\Omega;\overline M)$ be given, and let $u\in W_g^{1,2}(\Omega;\overline M)$ be a minimizing constraint map for $\cE_{\lambda,\gamma}$. Then $u(\Omega) \subset K$ up to a null set, where $K$ is the closed convex hull of $g(\Omega)\cup M^c$. In particular, as $M^c$ is compact, $|u|\in L^\infty(\Omega)$.
\end{lem}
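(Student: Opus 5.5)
The plan is to compare $u$ with its composition with the nearest point projection onto $K$. Let $P_K:\R^m\to K$ denote the nearest point projection onto the closed convex set $K$; it is $1$-Lipschitz. Set $v = P_K\circ u$. Then $v\in W^{1,2}(\Omega;\R^m)$ with $|Dv|\le |Du|$ a.e. Since $g(\Omega)\subset K$ a.e., we have $P_K\circ g = g$. The map $P_K$ is Lipschitz and fixes $K$, so $v-g = P_K\circ u - P_K\circ g$ has zero trace, and hence $v\in g+W^{1,2}_0$. One can justify this by approximating $u-g$ by $C_c^\infty$ functions, or by the standard fact that Lipschitz composition preserves the trace class.

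Next we check the constraint $v\in\overline M$. If $u(x)\in K$, then $v(x)=u(x)\in\overline M$. If $u(x)\notin K$, then $v(x)\in\partial K$. Because $M^c\subset K$, we get $K^c\subset M$, and therefore $\partial K\subset \overline{K^c}\subset\overline M$. Thus $v\in W^{1,2}_g(\Omega;\overline M)$ is an admissible competitor.

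The key point is comparing the potential term: we need $\rho(v(x))\le\rho(u(x))$. Here $\rho(y)=\dist(y,M^c)$ for $y\in\overline M$. For any $z\in M^c\subset K$, the projection property gives $|P_K y - z| = |P_Ky-P_Kz|\le |y-z|$. Taking the infimum over $z$ yields $\rho(P_Ky)\le\rho(y)$. Since $\gamma>0$, it follows that $(\rho\circ v)^\gamma\le(\rho\circ u)^\gamma$, and so $\cE_{\lambda,\gamma}[v]\le\cE_{\lambda,\gamma}[u]$.

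It remains to turn the energy comparison into equality of the maps, which needs strictness. By minimality, equality holds in both terms, so in particular $|D(P_K\circ u)| = |Du|$ a.e. The cleanest way to conclude $u=v$ is not via strict convexity of the Dirichlet energy, since the admissible class is not convex. Instead, look at $w = u - P_K\circ u$ and use $|w| = \dist(u,K)$, which is in $W^{1,2}$ with zero trace. We use the pointwise identity $|Du|^2 = |D(P_Ku)|^2 + |Dw|^2 + 2\,D(P_Ku):Dw$. On the set $\{u\notin K\}$, the derivative $DP_K$ is a symmetric positive semidefinite contraction, and $D(P_Ku):Dw = DP_K Du:(I-DP_K)Du\ge 0$ at points of differentiability. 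Since $P_K$ is differentiable a.e. and is $C^{1}$ off $K$ as a convex projection, or using the chain rule for Lipschitz maps, equality of energies forces $Dw=0$ a.e. Then $w$ is constant, and its zero trace gives $w=0$, i.e. $u\in K$ a.e. Alternatively, use the minimizer $u_t=(1-t)u+tP_Ku$, which stays in $\overline M$ for $t\in[0,1]$ since segments toward $K$... but the identity argument above is the one I will pursue.

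The main obstacle is precisely this last step: justifying the pointwise decomposition and nonnegativity of the cross term for a merely Lipschitz $P_K$ composed with a Sobolev map. If that proves delicate, I would argue through the scalar function $\dist(u,K)$ and the $1$-Lipschitz, a.e. $C^1$ structure of $P_K$ on $\R^m\setminus K$.
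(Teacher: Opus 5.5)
\textbf{Comparison with the paper.} Your route differs from the paper's and is sound in outline. The paper never projects onto $K$. It takes an arbitrary closed half-space $H\supset g(\Omega)\cup M^c$ and compares $u$ with $\pi_H\circ u$. Because $\pi_H$ is piecewise affine, the splitting $|Du|^2=|D(\pi_H\circ u)|^2+|D\dist(u,H)|^2$ is exact and elementary: $\dist(u,H)$ is just the positive part of the scalar Sobolev function $\nu_H\cdot u-c$. Minimality then gives $D\dist(u,H)=0$, the zero trace gives $u\in H$ a.e., and $K$ is recovered as an intersection of such half-spaces. (To control the exceptional null sets one should intersect a countable family, e.g.\ over a dense set of normals, which suffices since $K$ is compact.) Projecting onto $K$ directly, as you do, gives $u\in K$ a.e.\ in one step with no intersection argument. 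Your admissibility check, the bound $\rho(P_Ky)\le\rho(y)$ via $M^c\subset K$, and the endgame ($w\in W_0^{1,2}$ and $Dw=0$ imply $w=0$) are all correct. The price is that $P_K$ is merely Lipschitz, and that is exactly the step you flagged as the main obstacle.

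\textbf{The flagged step.} Your sketched justification of $D(P_K\circ u):Dw\ge 0$ fails as stated, for two reasons.
\begin{itemize}
\item $P_K$ need not be $C^1$ off $K$. If $K$ has a corner, as for $K=\mathrm{conv}(M^c\cup\{p\})$ with $p$ far from $M^c$, then $DP_K$ jumps across the boundary of the normal cone at the vertex.
\item A.e.\ differentiability of $P_K$ on $\R^m$ says nothing about differentiability at $u(x)$ for a.e.\ $x$. For vector-valued $u$, a set of positive measure can be mapped into the null set where $P_K$ is not differentiable, with $Du\neq 0$ there. So the formula $D(P_K\circ u)=DP_K(u)\,Du$ is not available.
\end{itemize}
The fix avoids derivatives of $P_K$ altogether. $P_K$ is firmly nonexpansive, $|P_Ky-P_Kz|^2\le (P_Ky-P_Kz)\cdot(y-z)$, which is equivalent to
\[
|P_Ky-P_Kz|^2+|(y-P_Ky)-(z-P_Kz)|^2\le |y-z|^2 .
\]
Apply this to the values of the ACL representative of $u$ at $x$ and $x+he_\alpha$, divide by $h^2$, and let $h\to0$. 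Do this at points where the partial derivatives of $u$ and of $v=P_K\circ u$ both exist; this holds a.e.\ on a.e.\ line parallel to $e_\alpha$, and these partials coincide with the weak ones. The result is $|D_\alpha v|^2+|D_\alpha w|^2\le |D_\alpha u|^2$ a.e.\ for each $\alpha$, so $|Dv|=|Du|$ forces $Dw=0$ a.e.

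If you prefer the Ambrosio--Dal Maso chain rule, note that it only gives the derivative of $P_K$ along $u(x)+\mathrm{range}\,Du(x)$. So the symmetric positive semidefinite argument must in any case be replaced by firm nonexpansiveness applied direction by direction. With this replacement your proof is complete.
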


\begin{proof}
Let $H$ be a closed half-space in $\R^m$ such that $g(\Omega)\cup M^c\subset H$. Let $\pi_H : \R^m\to H$ be the orthogonal projection map. Then for any $v\in W_g^{1,2}(\Omega;\overline M)$, we have $\pi_H\circ v \in W_g^{1,2}(\Omega;\overline M)$, as well as that $|D(\pi_H\circ v)|\leq |Dv|$ and $\rho\circ \pi_H\circ v\leq \rho\circ v$ a.e.\ in $\Omega$; as a result,  
$$
\cE_{\lambda,\gamma}[\pi_H\circ v]\leq \cE_{\lambda,\gamma}[v],\quad\forall v\in W_g^{1,2}(\Omega;\overline M).
$$
Thus, the minimality of $u\in W_g^{1,2}(\Omega;\overline M)$ for $\cE_{\lambda,\gamma}$ shows that 
$$
0 = \cE_{\lambda,\gamma}[u] - \cE_{\lambda,\gamma}[\pi_H\circ u] = \int_\Omega \frac{1}{2}(|Du|^2 - |D(\pi_H\circ u)|^2) + {\lambda}((\rho\circ u)^\gamma -(\rho\circ \pi_H\circ u)^\gamma) \,dx.
$$
Since both parentheses in the integrand are nonnegative a.e.\ in $\Omega$, we obtain $|D(\pi_H\circ u)| = |Du|$ a.e.\ in $\Omega$. In particular, denoting by $\nu_H$ the outward unit normal to $\partial H$, it follows from the last identity that 
$$
|D\dist(u, H)|^2 = |\nu_H Du|^2 = |Du|^2 - |D(\pi_H\circ u)|^2 = 0 \quad\text{a.e.\ in }\Omega. 
$$
However, as $u\in W_g^{1,2}(\Omega;\overline M)$ and $H$ is a closed half-space containing $g(\Omega)$, we have $\dist(g,H) = 0$ a.e.\ in $\Omega$, whence 
$$
\dist(u,H) = 0\quad\text{a.e.\ on }\partial\Omega,
$$
in the trace sense. Combining this with the displayed equation above, we deduce that $\dist(u,H) = 0$ a.e.\ in $\Omega$, i.e., $u(\Omega)\subset H$ up to a null set. As $H$ was an arbitrary closed half-space containing $g(\Omega)$, the assertion of this lemma follows. 
\end{proof}

Since energy minimizing maps are also stationary with respect to inner variations, we obtain a monotonicity formula for Alt-Phillips constraint maps. Note that we obtain an almost monotonicity formula, rather than an exact one, due to the presence of the extra term $\lambda(\rho\circ u)^\gamma$ in the energy functional $\cE_{\lambda,\gamma}$. This is essentially the same as Alt-Caffarelli constraint maps \cite{FGKS1}:

\begin{lem}[Almost monotonicity formula]
    \label{lem:monot}
Let $u\in W^{1,2}\cap L^\infty(\Omega;\overline M)$ be a minimizing constraint map for $\cE_{\lambda,\gamma}$. Then 
$$
s^{2-n} \int_{B_s(x_0)} |Du|^2\,dx - r^{2-n}\int_{B_r(x_0)} |Du|^2\,dx \geq 2\int_{B_s(x_0)\setminus B_r(x_0)} R^{2-n}\left| \frac{\partial u}{\partial N}\right|^2 dx - \omega_n {\lambda} d^\gamma s^2,
$$
whenever $0 < r \leq s < \dist(x_0,\partial\Omega)$, where $R := |x-x_0|$, $\partial/\partial N$ denotes the directional derivative in the radial direction $R^{-1}(x-x_0)$, and $d = \esssup_\Omega(\rho\circ u)$. Moreover, if $\lambda = 0$, and $r^{2-n}\int_{B_r(x_0)} |Du|^2\,dx = s^{2-n}\int_{B_s(x_0)} |Du|^2\,dx$, then $u$ is $0$-homogeneous about $x_0$ in $B_s(x_0)\setminus B_r(x_0)$, i.e., $(x-x_0) \cdot Du = 0$ a.e.\ in $B_s(x_0)\setminus B_r(x_0)$. 
\end{lem}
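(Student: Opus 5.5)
I will derive the inequality from the domain-variation (inner variation) identity. Fix $x_0$ and, for $\phi\in C_c^\infty(\Omega;\R^n)$, set $u_t(x) = u(x + t\phi(x))$. Since $u_t$ takes values in $\overline M$ and agrees with $u$ near $\partial\Omega$, it is an admissible competitor in $W_u^{1,2}(\Omega;\overline M)$. Changing variables $y = x+t\phi(x)$ and differentiating $\cE_{\lambda,\gamma}[u_t]$ at $t=0$ gives the stationarity identity
\begin{equation*}
\int_\Omega \Big(\tfrac12|Du|^2 + \lambda(\rho\circ u)^\gamma\Big)\operatorname{div}\phi - D_\alpha u\cdot D_\beta u\, D_\alpha\phi^\beta \,dx = 0 .
\end{equation*}
This step only uses that $(\rho\circ u)^\gamma$ is composed with a diffeomorphism. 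No differentiability of $\rho^\gamma$ is needed, which is important because $\gamma<1$ is allowed.

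Next I take $\phi(x) = \eta(|x-x_0|)(x-x_0)$, where $\eta$ approximates $\chi_{[0,\tau)}$. Letting $\eta\to\chi_{[0,\tau)}$ yields, for a.e.\ $\tau$,
\begin{equation*}
(2-n)\int_{B_\tau}|Du|^2 + \tau\int_{\partial B_\tau}|Du|^2 - 2\tau\int_{\partial B_\tau}\Big|\frac{\partial u}{\partial N}\Big|^2 = 2n\lambda\int_{B_\tau}(\rho\circ u)^\gamma - 2\lambda\tau\int_{\partial B_\tau}(\rho\circ u)^\gamma .
\end{equation*}
I multiply by $\tau^{1-n}$ and recognize the left side as $\frac{d}{d\tau}\big(\tau^{2-n}\int_{B_\tau}|Du|^2\big)\cdot\tau - 2\tau^{2-n}\int_{\partial B_\tau}|\partial_N u|^2$, up to the normalization. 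This gives
\begin{equation*}
\frac{d}{d\tau}\Big(\tau^{2-n}\int_{B_\tau}|Du|^2\Big) = 2\tau^{2-n}\int_{\partial B_\tau}\Big|\frac{\partial u}{\partial N}\Big|^2 + 2\lambda\tau^{1-n}\Big(n\int_{B_\tau}(\rho\circ u)^\gamma - \tau\int_{\partial B_\tau}(\rho\circ u)^\gamma\Big).
\end{equation*}
For the lower bound, I drop the nonnegative term $2n\lambda\tau^{1-n}\int_{B_\tau}(\rho\circ u)^\gamma$. The boundary term I bound by $\tau\int_{\partial B_\tau}(\rho\circ u)^\gamma\le \tau\, n\omega_n\tau^{n-1}d^\gamma$, using $0\le\rho\circ u\le d$ a.e., with $\omega_n = |B_1|$. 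The error is therefore at least $-2\lambda n\omega_n d^\gamma\tau$ in the derivative. Integrating from $r$ to $s$ gives an error of $-\lambda n\omega_n d^\gamma(s^2-r^2)$, and the main term integrates to $2\int_{B_s\setminus B_r}R^{2-n}|\partial_N u|^2$.

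The main obstacle is matching the paper's exact constant $\omega_n$: my crude bound gives $n\omega_n$. To do better, I will keep the interior term and estimate more carefully. Writing $f(\tau)=\int_{B_\tau}(\rho\circ u)^\gamma$, the error is $2\lambda\tau^{1-n}(nf - \tau f') = -2\lambda\tau^{2}\frac{d}{d\tau}(\tau^{-n}f)$. Integrating by parts gives
\begin{equation*}
-2\lambda\big[s^{2-n}f(s) - r^{2-n}f(r)\big] + 4\lambda\int_r^s\tau^{1-n}f\,d\tau \ \ge\ -2\lambda s^{2-n}f(s) \ \ge\ -2\lambda\omega_n d^\gamma s^2 .
\end{equation*}
Here $\omega_n$ is $|B_1|$. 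The remaining factor $2$ depends on normalization conventions: if the paper's $\omega_n$ or the factor in front of $\lambda$ absorbs it, the bound matches. Otherwise the constant is harmless for all later uses.

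For the rigidity statement with $\lambda=0$, the error term vanishes. Equality of the normalized energies then forces $\int_{B_s\setminus B_r}R^{2-n}|\partial_N u|^2=0$, so $(x-x_0)\cdot Du=0$ a.e.\ in the annulus.

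The technical points are the a.e.\ $\tau$ limit and the absolute continuity of $\tau\mapsto\int_{B_\tau}|Du|^2$. Both follow from Fubini since $Du\in L^2$, and $u\in L^\infty$ ensures that all terms are finite.
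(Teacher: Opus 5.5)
Your proposal is correct and is essentially the paper's argument: the paper defers to the inner-variation proof of the Alt--Caffarelli case, where the only new point is the pointwise bound $\lambda(\rho\circ u)^\gamma\le\lambda d^\gamma$, and that is exactly how you control the error term. The constant mismatch is only a matter of normalization: your first (crude) estimate gives precisely $-\lambda\,\mathcal{H}^{n-1}(\partial B_1)\,d^\gamma s^2$, which is the paper's term if $\omega_n$ denotes the area of the unit sphere (the paper never defines $\omega_n$), and all later uses only need a bound of the form $c\lambda d^\gamma s^2$.
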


\begin{proof}
    One may follow almost verbatim the proof of \cite[Lemma 2.2]{FGKS1}. The only difference here is that at the final stage, we estimate $\lambda(\rho\circ u)^\gamma \leq \lambda d^\gamma$, which holds by assumption a.e.\ in $\Omega$. We omit the details. 
\end{proof}

Note that for $\gamma > 0$, the Alt-Phillips energy $\cE_{\lambda,\gamma}$ verifies the main assumption \cite[(A1)]{L}. Especially, with the above monotonicity formula at hand, the entire partial regularity theory in \cite{L} carries over the current setting. 

\begin{thm}[Partial regularity due to \cite{L}]
    \label{thm:e-reg-Ca}
    Let $u\in W^{1,2}(\Omega;\overline M)$ be a minimizing constraint map for $\cE_{\lambda,\gamma}$. Then there exists a closed set $\Sigma\subset\Omega$ such that $u\in C_\loc^{0,\alpha}(\Omega\setminus\Sigma;\overline M)$, for every $\alpha \in (0,1)$, and $\dim_\cH\Sigma \leq n-3$. In particular, when $n = 3$, the set $\Sigma$ is discrete. Moreover, $\lim_{r\downarrow 0} r^{2-n}\int_{B_r(x_0)} |Du|^2\,dx = 0$ for all $x_0\in \Omega\setminus\Sigma$.
\end{thm}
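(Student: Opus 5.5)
The plan is to follow Luckhaus' scheme from \cite{L}, in the simplified form used for Alt--Caffarelli constraint maps in \cite{FGKS1}, and to check that the only places where the functional enters are the almost monotonicity formula (Lemma \ref{lem:monot}) and the comparison (energy) estimates. The proof has three parts: an $\varepsilon$-regularity statement giving a decay of the normalized energy $\Phi(x,r):=r^{2-n}\int_{B_r(x)}|Du|^2\,dx$; a dimension bound for the set where the energy fails to be small; and a Federer-type reduction showing that this dimension is at most $n-3$.

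\emph{Step 1: energy decay.} I claim there are $\varepsilon_0,\theta\in(0,1)$ with the following property. If $\Phi(x,r)+r^2\lambda d^\gamma\le\varepsilon_0$, then
\[
\Phi(x,\theta r)\le \tfrac12\Phi(x,r)+Cr^2\lambda d^\gamma .
\]
I would prove this by blow-up and contradiction, as in \cite{L}. Take rescaled maps $u_k(y)=u(x_k+r_ky)$ with $\Phi_k:=\Phi(x_k,r_k)=\varepsilon_k\to0$ and $r_k^2\lambda d^\gamma=o(\varepsilon_k)$. Normalize by setting $v_k=(u_k-\bar u_k)/\sqrt{\varepsilon_k}$. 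The source term in the rescaled energy is bounded by $r_k^2\lambda d^\gamma$, which is negligible relative to $\varepsilon_k$. Luckhaus' interpolation/extension lemma then lets us build competitors that stay in $\overline M$: near $\partial M$ we project onto the tangent cone of $\overline M$ at the limit point. This shows that $v_k$ converges strongly in $W^{1,2}_\loc$ to a minimizer of the Dirichlet energy with values in a half-space or in all of $\R^m$. Such a limit is smooth, with Campanato decay, and this contradicts the failure of the decay.

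\emph{Step 2: H\"older continuity.} Iterating Step 1 gives $\Phi(y,\rho)\le C(\rho/r)^{2\alpha}$ for points $y$ near $x$. Morrey's lemma then yields $u\in C^{0,\alpha}$ for every $\alpha<1$; to reach every $\alpha<1$ one uses the improved decay in the smooth limit. Set $\Sigma=\{x:\liminf_{r\downarrow 0}\Phi(x,r)>0\}$. Since $\lambda d^\gamma r^2\to0$, the set $\Sigma$ is relatively closed. The standard covering argument gives $\mathcal H^{n-2}(\Sigma)=0$, and the last claim of the theorem, that $\Phi(x,r)\to0$ off $\Sigma$, follows from the decay.

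\emph{Step 3: dimension reduction.} Tangent maps at points of $\Sigma$ exist by Lemma \ref{lem:monot}, because the error term $\omega_n\lambda d^\gamma s^2$ vanishes under blow-up. Compactness of minimizers, via strong convergence from Luckhaus' lemma, shows that tangent maps are $0$-homogeneous Dirichlet-minimizing constraint maps, corresponding to the case $\lambda=0$ of Lemma \ref{lem:monot}. Federer's reduction then reduces the question to $0$-homogeneous minimizers in low dimension. A $0$-homogeneous minimizer on $\R^2$ is a minimizing constraint map from $\Ss^1$, that is, a geodesic. It is constant by minimality and the energy scaling, since in two dimensions $\int|Du|^2$ of a homogeneous map diverges logarithmically unless the map is constant. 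This gives $\dim_\cH\Sigma\le n-3$, and in the case $n=3$ the blow-up argument shows $\Sigma$ is discrete.

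The main obstacle is the compactness and extension step in Step 1 (and its reuse in Step 3). Competitors must respect the constraint $\overline M$ while the source term is controlled. I would handle this with Luckhaus' lemma combined with the nearest-point retraction onto $\overline M$ near $\partial M$; this retraction is Lipschitz on a neighborhood of $\overline M$ because $\partial M$ is smooth and $M^c$ is compact. The source term is simply bounded by $\lambda d^\gamma|B_r|$, which is why the hypothesis $\gamma>0$, through assumption \cite[(A1)]{L}, suffices.
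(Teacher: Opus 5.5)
Your proposal is correct and follows essentially the paper's route. The paper observes that $\cE_{\lambda,\gamma}$ satisfies Luckhaus' structural hypothesis \cite[(A1)]{L} and, together with the almost monotonicity formula of Lemma~\ref{lem:monot}, imports the partial regularity theory of \cite{L} wholesale; your outline spells out that same scheme (blow-up energy decay via Luckhaus' extension lemma with harmonic limits into a half-space or $\R^m$, Morrey's lemma, covering, and Federer's reduction ending with the logarithmic divergence of planar $0$-homogeneous maps).

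One small correction to your closing remark: the bound $\lambda(\rho\circ u)^\gamma\le\lambda d^\gamma$ that your argument uses is just almost-minimality of the Dirichlet energy and holds equally for $\gamma=0$, so that bound is not where $\gamma>0$ matters. In the paper $\gamma>0$ is invoked only to verify \cite[(A1)]{L}, presumably through the continuity of $\rho^\gamma$ in the target variable.
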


\begin{rem}
  Note that Theorem \ref{thm:e-reg-Ca} only provides $C^{0,\alpha}$ regularity. 
However, optimal smoothness is established in Corollary \ref{cor:partial}, 
which yields $C^{1,\alpha}$ regularity ($0 < \alpha \leq 1$), where $\alpha$ depends on the value of $\gamma$.
\end{rem}

We will later need the following results for $\cE_0$-minimizing constraint maps. 

\begin{thm}
    \label{thm:e-reg-clean}[Results for the case \texorpdfstring{$\cE_0$}{cE_o}.]
    Let $v\in W^{1,2}(B_{2R}(x_0);\overline M)$ be a minimizing constraint map for $\cE_0$. Then there exists $\e_0 > 0$, depending only on $n$ and $\partial M$, such that if $(2R)^{2-n}\int_{B_{2R}(x_0)} |Dv|^2\,dx < \e_0$, then $v\in C^{1,1}(B_R(x_0);\overline M)$ and 
    \begin{equation}
        \label{eq:e-reg-clean}
        \| D^j v \|_{L^\infty(B_R(x_0))} \leq \frac{1}{R^j},\quad j\in\{1,2\}.
    \end{equation}
    Moreover, for every $\alpha\in(0,1)$, it satisfies 
    \begin{equation}
        \label{eq:camp}
        \begin{aligned}
            \int_{B_r(y)} |Dv -(Dv)_{y,r}|^2\,dx &\leq c\bigg(\frac{r}{s}\bigg)^{n+2\alpha}\int_{B_s(y)} |Dv - (Dv)_{y,s}|^2\,dx\\
            &\quad + c\bigg(\frac{r}{R}\bigg)^{n+2\alpha}\int_{B_{2R}(x_0)} |Dv|^2\,dx,
        \end{aligned}
    \end{equation}
    whenever $0<r<s\leq R - |y-x_0|$ and $y\in B_R(x_0)$, where $c > 1$ may depend further on $\alpha$. 
\end{thm}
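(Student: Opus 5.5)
The statement concerns $\cE_0$-minimizing constraint maps and is essentially imported from \cite{FGKS2}. I would therefore prove it by reduction to the $\e$-regularity theory for constraint maps developed there, rather than from scratch.

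\emph{Step 1: qualitative smoothness.} By the monotonicity formula (Lemma \ref{lem:monot} with $\lambda=0$, which is exactly monotone) and Luckhaus' partial regularity (Theorem \ref{thm:e-reg-Ca} with $\lambda=0$), smallness of the scaled energy at scale $2R$ propagates to all smaller balls centred in $B_{3R/2}(x_0)$. A standard Luckhaus-type harmonic approximation then gives H\"older continuity, and in fact Morrey decay $r^{2-n}\int_{B_r(y)}|Dv|^2\le C(r/R)^{2\beta}\e_0$. Once $v$ is continuous with small oscillation, I localize. Where $v\in M$ the map is harmonic. Near $\partial M$, the Euler--Lagrange system derived in \cite{FGKS2} is
$$
\Delta v = -A_{v}(Dv,Dv)\,\nu(v)\,\chi_{\{v\in\partial M\}},
$$
interpreted in the tubular neighbourhood $B_{d_0}(\partial M)$. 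The right-hand side is bounded by $c|Dv|^2$.

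\emph{Step 2: quantitative gradient bounds.} With the Morrey bound from Step 1, the right-hand side lies in the Morrey space $L^{1,n-2+2\beta}$. A Campanato iteration comparing $v$ with its harmonic replacement on $B_s(y)$ yields $Dv\in C^{0,\beta}$ and $\|Dv\|_{L^\infty(B_R)}\le c\,\e_0^{1/2}/R$. Taking $\e_0$ small turns the constant into $1/R$. Once $Dv$ is bounded, the right-hand side is in $L^\infty$, so $W^{2,p}$ estimates for all $p$ give $Dv\in C^{0,\alpha}$ for every $\alpha<1$.

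\emph{Step 3: the Campanato inequality \eqref{eq:camp}.} Let $h$ be the harmonic function with $h=v$ on $\partial B_s(y)$. Harmonic decay gives
$$
\int_{B_r}|Dh-(Dh)_r|^2\le c(r/s)^{n+2}\int_{B_s}|Dh-(Dh)_s|^2 .
$$
For the error, $\int_{B_s}|D(v-h)|^2\le c\,s^{n+2}\|\Delta v\|_\infty^2\le c\,s^{n+2}\|Dv\|_\infty^4$. Combined with the bound $\|Dv\|_\infty^2\le cR^{-n}\int_{B_{2R}}|Dv|^2$ (from Step 2 by rescaling) and $\|Dv\|_\infty\le 1/R$, this error is at most $c(s/R)^{n+2}\int_{B_{2R}}|Dv|^2$. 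The standard iteration lemma, trading exponent $n+2$ for $n+2\alpha<n+2$, then gives \eqref{eq:camp}.

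\emph{Step 4: $C^{1,1}$.} This is the main obstacle: the right-hand side $\chi_{\{v\in\partial M\}}A(Dv,Dv)\nu$ is discontinuous, so the Campanato iteration only gives $C^{1,\alpha}$ for $\alpha<1$. To reach $C^{1,1}$ I would follow \cite{FGKS2} and treat the problem as an obstacle-type problem for the scalar $\rho\circ v$. Near $\partial M$ one has $\rho\circ v\ge0$, and $\Delta(\rho\circ v)$ is bounded on $\{\rho\circ v>0\}$ by $c|Dv|^2$. The tangential part $\Pi\circ v$ solves an elliptic system with bounded right-hand side. Optimal $C^{1,1}$ regularity for obstacle problems with bounded data (Caffarelli-type quadratic growth away from the free boundary) then gives $|D^2v|\le c\|Dv\|_\infty^2+\dots\le 1/R^2$ after shrinking $\e_0$. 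If the projection structure is unavailable, I would instead invoke \cite[Theorem 1.1 and its proof]{FGKS2} directly for this step.
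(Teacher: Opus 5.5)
Your outline is correct. For \eqref{eq:e-reg-clean} you end where the paper does, by citing \cite{FGKS2}. The genuine difference is in \eqref{eq:camp}.

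\textbf{Comparison of routes.} Both arguments use the harmonic replacement $h$ on $B_s(y)$; they differ in how the error is controlled. The paper bounds the error by $c\int_{B_s(y)}|v-h||Dv|^2$ and estimates the two factors separately:
\begin{itemize}
\item $\sup_{B_s(y)}|v-h|\le 2(s/R)^2$, from the imported $C^{1,1}$ bound via Taylor expansion about the affine approximation $\ell$ at $y$ and the maximum principle for $h-\ell$;
\item $\int_{B_s(y)}|Dv|^2\le c(s/R)^{n-2+2\alpha}\int_{B_{2R}(x_0)}|Dv|^2$, from Luckhaus.
\end{itemize}
So the linear dependence on the energy comes from Luckhaus alone, and the constant-one $C^{1,1}$ bound suffices.

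You instead use $|\Delta v|\le c|Dv|^2$ and Poincar\'e to get $\int_{B_s}|D(v-h)|^2\le cs^{n+2}\|Dv\|_{L^\infty}^4$. This gives a better power of $s$ and makes \eqref{eq:camp} independent of the $C^{1,1}$ theorem. You need only a Lipschitz bound, which Luckhaus' theory yields by a Campanato argument, essentially the first pass in the proof of Lemma \ref{lem:C1a-sub}. That argument gives $Dv\in C^{0,\beta'}$ for some $\beta'<\beta$ (with $\beta$ taken close to $1$), not $Dv\in C^{0,\beta}$ as you wrote.

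\textbf{The step that needs fixing.} Your route requires the Lipschitz bound with \emph{linear} dependence on the energy,
$\|Dv\|_{L^\infty(B_R(x_0))}^2\le cR^{-n}\int_{B_{2R}(x_0)}|Dv|^2$.
This does not follow from Step 2 ``by rescaling''. Rescaling preserves the scaled energy, so Step 2 as written gives only $\|Dv\|_{L^\infty}^2\le c\e_0/R^2$. That leaves an error of size $c\e_0(s/R)^{n+2}R^{n-2}$ rather than the required $c(s/R)^{n+2}\int_{B_{2R}(x_0)}|Dv|^2$, and \eqref{eq:camp} in the stated form would not follow.

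The fix is to rerun Step 2 with $E:=(2R)^{2-n}\int_{B_{2R}(x_0)}|Dv|^2$ in place of $\e_0$. The Morrey and oscillation bounds are linear in $E$ and $E^{1/2}$ respectively. The nonlinear error in the Campanato iteration is of order $E^{3/2}\le\e_0^{1/2}E$. Hence $[Dv]^2_{C^{0,\beta'}}\le cER^{-2-2\beta'}$, and then $|Dv|^2\le cE/R^2$ on $B_R(x_0)$. With this made explicit, your proof of \eqref{eq:camp} is complete.

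\textbf{Remarks on Step 4.} If you keep your sketch there, two claims need correcting:
\begin{itemize}
\item Merely bounded data does not give $C^{1,1}$ for the obstacle-type problem satisfied by $\rho\circ v$. You need $\Hess\rho_v(Dv,Dv)$ to be H\"older, which your $C^{1,\alpha}$ bound supplies.
\item For $\Pi\circ v$, a bounded right-hand side only gives $W^{2,p}$; you must bootstrap to $W^{3,p}$.
\end{itemize}
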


Estimate \eqref{eq:e-reg-clean} is provided in \cite[Theorem 2.4]{FGKS2}. While the second estimate \eqref{eq:camp} follows from \cite[Proposition 1]{L} and standard variational techniques (cf. \cite[Theorem 9.7]{GM}), we provide a detailed derivation here to ensure the exposition is self-contained and easily accessible.

\begin{proof}[Proof of \eqref{eq:camp}]
    Throughout the proof, we shall fix $\alpha \in (0,1)$, and denote by $c$ a generic positive constant that may depend at most on $n$, $\partial M$, and $\alpha$.    
    By \cite{D}, we have 
    \begin{equation}
        \label{eq:v-EL}
        \Delta v = A_v(Dv,Dv)\chi_{v^{-1}(\partial M)}\quad\text{in }B_{2R}(x_0),
    \end{equation}
    in the weak sense. Now let us fix $y\in B_R(x_0)$, and $0< r< s \leq R - |y-x_0|$. Then since $(2R)^{2-n}\int_{B_R(x_0)} |Dv|^2\,dx < \e_0$ and $B_R(y)\subset B_{2R}(x_0)$, the $\e$-regularity theory \cite{L} yields that
    \begin{equation}
        \label{eq:e-reg-L}
        \int_{B_s(y)}|Dv|^2 \,dx \leq c \bigg(\frac{s}{R}\bigg)^{n-2+2\alpha} \int_{B_{2R}(x_0)} |Dv|^2\,dx. 
    \end{equation} 
    Now let $h\in W_v^{1,2}(B_s(y);\R^m)$ be such that $|\Delta h| = 0$ in $B_s(y)$. Since each component of $h$ is harmonic in $B_s(y)$, we obtain from the usual variational comparison with harmonic functions that 
    \begin{equation}
        \label{eq:v-h-C1a}
        \int_{B_r(y)} |Dv - (Dv)_{y,r}|^2\,dx \leq c\bigg(\frac{r}{s}\bigg)^{n+2}\int_{B_s(y)} |Dv - (Dv)_{y,s}|^2\,dx + c \int_{B_s(y)} |D(v-h)|^2\,dx. 
    \end{equation}
    By \eqref{eq:v-EL}, the assumption that $\partial M$ is smooth and compact, which yields $|A_y(\xi,\xi)|\leq c|\xi|^2$ uniformly for all $y\in \partial M$ and all $\xi\in T_y(\partial M)$, as well as the energy minimality of $h$ within $W_v^{1,2}(B_s(y);\R^m)$, we obtain that 
    \begin{equation}
        \label{eq:v-h-C1a-2}
        \begin{aligned}
            \int_{B_s(y)} |D(v-h)|^2\,dx &\leq 2\int_{B_s(y)} Dv:D(v-h)\,dx \\
            & = - 2\int_{B_s(y)\cap v^{-1}(\partial M)} (v-h)\cdot A_v(Dv,Dv)\,dx \\
            &\leq c \int_{B_s(y)} |v-h||Dv|^2\,dx. 
        \end{aligned}
    \end{equation}

    We shall now use the first part of Theorem \ref{thm:e-reg-clean} that $v\in C^{1,1}(B_R(x_0);\partial M)$ with the estimate \eqref{eq:e-reg-clean}, since this part is already known, c.f. \cite[Theorem 2.4]{FGKS2}.
Let us write by $\ell$ the affine approximation of $v$ at $y$, i.e., $\ell(y) = v(y)$ and $D\ell (y) = Dv(y)$. Then by \eqref{eq:e-reg-clean}, we have 
    \begin{equation}
        \label{eq:v-l-sup}
        \sup_{B_s(y)} |v-\ell| \leq \bigg(\frac{s}{R}\bigg)^2. 
    \end{equation}
    Together with the maximum principle for harmonic functions and the fact that $h = v$ on $\partial B_s(y)$, we can deduce from \eqref{eq:v-l-sup} that
    \begin{equation}
        \label{eq:h-l-sup}
        \sup_{B_s(y)} |h-\ell| \leq \sup_{\partial B_s(y)} |v-\ell| \leq \bigg(\frac{s}{R}\bigg)^2. 
    \end{equation}
    Hence, by \eqref{eq:e-reg-L}, \eqref{eq:v-l-sup}, \eqref{eq:h-l-sup}, we deduce that 
    \begin{equation}
        \label{eq:v-h}
        \int_{B_s(y)} |v-h||Dv|^2\,dx \leq c\bigg(\frac{s}{R}\bigg)^{n+2\alpha} \int_{B_{2R}(x_0)} |Dv|^2\,dx. 
    \end{equation}
    Combining \eqref{eq:v-h-C1a-2} with \eqref{eq:v-h}, we obtain an upper bound for the rightmost term of \eqref{eq:v-h-C1a}, and proceed with the computation further as 
    $$
    \int_{B_r(y)} |Dv-(Dv)_{y,r}|^2\,dx \leq c\bigg(\frac{r}{s}\bigg)^{n+2}\int_{B_s(y)} |Dv - (Dv)_{y,s}|^2\,dx + c\bigg(\frac{s}{R}\bigg)^{n+2\alpha}\int_{B_{2R}(x_0)}|Dv|^2\,dx. 
    $$

    Our final assertion \eqref{eq:camp} now follows from the standard iteration lemma, c.f.\ \cite[Lemma 2.2]{GG}.

\end{proof}

\begin{rem}
    \label{rem:e-reg-clean}
    We shall need \eqref{eq:camp} later in Section \ref{sec:part-reg} when studying the partial regularity of minimizing constraint maps for the Alt-Phillips energy. Note that \eqref{eq:camp} is a $C^{1,\alpha}$-estimate for $\alpha < 1$. Although we have the optimal $C^{1,1}$-estimate \eqref{eq:e-reg-clean}, it is not clear to us whether we can also establish \eqref{eq:camp} for $\alpha = 1$. 
\end{rem}
As mentioned at the beginning, we assume that $\partial M$ is of class $C^\infty$. 
Throughout this paper, we shall write by $\rho$ the signed-distance map to $\partial M$, which takes positive values in $M$. As $\partial M$ is smooth, there is a tubular neighborhood of $\partial M$ of width $d_0 > 0$, denoted by $B_{d_0}(\partial M)$, in which $\rho \in C^\infty(B_{d_0}(\partial M))$. In this neighborhood, we can also consider the nearest point projection $\Pi$ onto $\partial M$. Note that $\Pi\in C^\infty({B_{d_0}(\partial M)};\partial M)$.

Let us first compute the Euler-Lagrange equation for the distance map $\rho\circ u$.

\begin{lem}\label{lem:dist-pde}
    Let $u\in W^{1,2}(\Omega;\overline M)$ be a minimizing constraint map for $\cE_{\lambda,\gamma}$. Suppose that $u\in C^1(U;{B_{d_0}(\partial M)})$ in a subdomain $U\subset\Omega$. Then $\rho\circ u \in C^1(U)$ and  
\begin{equation}\label{eq:dist-pde}
\begin{cases}
\Delta(\rho\circ u) = \Hess\rho_u(Du,Du) + \lambda\gamma(\rho\circ u)^{\gamma-1} & \text{in }U\cap u^{-1}(M), \\
\rho\circ u = |D(\rho\circ u)| = 0 & \text{on }U\setminus u^{-1}(M),
\end{cases}
\end{equation}
holds in the weak sense.  
\end{lem}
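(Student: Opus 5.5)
The regularity claim $\rho\circ u\in C^1(U)$ is immediate: $\rho$ is smooth on $B_{d_0}(\partial M)$ and $u\in C^1(U;B_{d_0}(\partial M))$, so the chain rule gives $D(\rho\circ u)=\nabla\rho_u\,Du = \nu(u)\cdot Du$. The second line of \eqref{eq:dist-pde} then follows from the sign of $\rho$. Since $u$ takes values in $\overline M$, we have $\rho\circ u\ge 0$. On $U\setminus u^{-1}(M)$, i.e.\ where $u\in\partial M$, the function $\rho\circ u$ vanishes. This set is relatively closed in $U$ by continuity, and $\rho\circ u$ attains its minimum value $0$ there. As $\rho\circ u\in C^1$, its gradient must vanish at such points. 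So $\rho\circ u = |D(\rho\circ u)|=0$ on $U\setminus u^{-1}(M)$.

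For the equation in the open set $V:=U\cap u^{-1}(M)$, I would first derive the Euler--Lagrange system for $u$ there. Near any point of $V$, $u$ stays in the open set $M$, so for $\varphi\in C_c^\infty(V;\R^m)$ and $|t|$ small the map $u+t\varphi$ is an admissible competitor. The argument is local: on a compact subset of $V$, continuity keeps $u$ a positive distance from $\partial M$, so $u+t\varphi\in M$ there.

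Differentiating $t\mapsto \cE_{\lambda,\gamma}[u+t\varphi]$ at $t=0$ is legitimate because $\rho\circ u>0$ on $\supp\varphi$. Hence $(\rho\circ u)^\gamma$ is differentiable there with derivative $\gamma(\rho\circ u)^{\gamma-1}\nabla\rho_u\cdot\varphi$, and this derivative is bounded on the support even when $\gamma<1$. We obtain, weakly in $V$,
$$
\Delta u = \lambda\gamma(\rho\circ u)^{\gamma-1}\nabla\rho_u = \lambda\gamma(\rho\circ u)^{\gamma-1}\nu(u).
$$
The right-hand side is continuous in $V$, so elliptic regularity gives $u\in W^{2,p}_\loc(V)$ for all $p<\infty$. (Further bootstrapping to $C^\infty$ is possible but not needed.)

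Now compute with the chain rule, valid for $W^{2,p}_\loc\cap C^1$ maps composed with smooth $\rho$:
$$
\Delta(\rho\circ u) = \partial_{ij}\rho(u)\,D_\alpha u^iD_\alpha u^j + \nabla\rho_u\cdot\Delta u = \Hess\rho_u(Du,Du) + \lambda\gamma(\rho\circ u)^{\gamma-1}|\nabla\rho_u|^2.
$$
Since $|\nabla\rho|=1$ in $B_{d_0}(\partial M)$, this is the first line of \eqref{eq:dist-pde}. It holds a.e.\ and hence weakly in $V$.

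The main point of care is interpreting the statement. It is written as a system holding on the two pieces, so the weak sense refers to $V$, where $(\rho\circ u)^{\gamma-1}$ is locally bounded. I would not claim a global distributional identity on all of $U$, because $(\rho\circ u)^{\gamma-1}$ may fail to be integrable near the free boundary when $\gamma<1$. The other delicate point is the admissibility and differentiability of the variation. Both are handled by working on compact subsets of $V$, where $\rho\circ u$ is bounded below by a positive constant.
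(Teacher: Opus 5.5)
Your proof is correct, but it reaches the first line of \eqref{eq:dist-pde} by a different route from the paper's.

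The paper never writes down the Euler--Lagrange system for $u$ itself. It tests minimality directly against the normal variations $u+\e\vp\,\nu\circ u$ with scalar $\vp\in C_c^\infty(U\cap u^{-1}(M))$. The first variation, combined with the identities $D_\alpha(\rho\circ u)=\nu^i\circ u\,D_\alpha u^i$ and $D_\alpha(\nu^i\circ u)\,D_\alpha u^i=\Hess\rho_u(Du,Du)$, gives the weak equation for $\rho\circ u$ in one step, using nothing beyond $u\in C^1$.

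You instead use that the constraint is inactive on $V=U\cap u^{-1}(M)$. You take arbitrary vector variations to get $\Delta u=\lambda\gamma(\rho\circ u)^{\gamma-1}\nu\circ u$ weakly in $V$, upgrade to $W^{2,p}_\loc(V)$ by Calder\'on--Zygmund, and then apply the second-order chain rule together with $|\nabla\rho|=1$.

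\textbf{What your route buys.} It costs an elliptic-regularity step that the paper avoids, but it gives more: the full Euler--Lagrange system on the non-coincidence set, $W^{2,p}_\loc$ (indeed smooth, after bootstrapping) regularity of $u$ there, and the distance equation a.e.\ rather than only weakly. It also keeps the factor $\lambda$ on the source term explicit; the paper's displays \eqref{eq:EL3}--\eqref{eq:EL4} drop it.

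\textbf{What the paper's route buys.} The normal-variation computation is more economical. It is not forced here, since the test functions are supported in $V$, where, as you note, every small variation is admissible. It does, however, parallel the constraint-preserving tangential flow in Lemma \ref{lem:proj-pde}, where such intrinsic variations are genuinely needed because the ball may meet $u^{-1}(\partial M)$.

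Your treatment of the second line ($\rho\circ u$ is $C^1$ and attains its minimum $0$ there, so its gradient vanishes) is the same as the paper's.
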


\begin{proof}
With $u\in C^1(U;{B_{d_0}(\partial M)})$ and $\rho\in C^\infty({B_{d_0}(\partial M)})\subset C^1({B_{d_0}(\partial M)})$, we have $\rho\circ u \in C^1(U)$. 
Since $\rho\circ u$ takes local minimum on $U\cap\partial u^{-1}(M)$, it follows that $|D(\rho\circ u)| = 0$ on $U\cap\partial u^{-1}(M)$. This proves the second line of \eqref{eq:dist-pde}. 

Since we assume that $u\in C^1(U;B_{d_0}(\partial M))\subset C(U;\R^m)$, the set $U\cap u^{-1}(M)$ is open. Moreover, since $u(U)\subset B_{d_0}(\partial M)$ and $\partial M$ is smooth, we have at least $\nu\circ u\in C^1(U)$. Therefore, given any (scalar)  function $\vp \in C_c^\infty(U\cap u^{-1}(M))$, we find $\delta > 0$ such that $\delta \leq \rho\circ u\leq d_0- \delta$ on $\supp\vp$.

Let $L > 0$ be such that $|\vp|\leq L$ on $\supp\vp$. Then for any $\e > 0$ small such that $2L\e \leq \delta$, we have 
$$0 <  \frac{1}{2}\delta  \leq \delta - \e L \leq  \rho \circ (u + \e \vp \nu\circ u)    
\leq d_0 - \delta + \e L < d_0, \qquad \hbox{in } \supp\vp.$$ 
Moreover, we also have $\rho\circ (u + \e \vp\nu\circ u) = \rho\circ u \geq 0$ in $\Omega\setminus\supp\vp$.

This shows that $u + \e\vp\nu\circ u \in W^{1,2}(\Omega;\overline M)$ and that $u + \e\vp\nu\circ u\in C^1(U;B_{d_0}(\partial M))$. Note further that since $\rho^\gamma \in C^2(M\cap B_{d_0}(\partial M))$ with $\nabla\rho^\gamma = \gamma \rho^{\gamma-1}\nu$ in $\{\rho > 0\}$, $\rho\circ u > 0$ in $u^{-1}(M)$, and $\supp\vp \Subset U\cap u^{-1}(M)$, we obtain  (using Taylor expansion)
\begin{equation}
    \label{eq:EL1}
    \begin{aligned}
        \int_\Omega \rho^\gamma\circ (u + \e \vp\nu\circ u)\,dx &= \int_\Omega \rho^\gamma\circ u \,dx  + \e \int_{U\cap u^{-1}(M)} \left(\gamma (\rho^{\gamma-1}\nu)\circ u\right)\cdot \left(\varphi \nu\circ u\right)\,dx + o(\e) \\
        &= \int_\Omega \rho^\gamma\circ u \,dx  + \e \int_{U\cap u^{-1}(M)} \gamma \varphi\rho^{\gamma-1}\circ u\,dx + o(\e),
    \end{aligned}
\end{equation}
as $\e \to 0$. Also, by the chain rule, we have $D_\alpha(\rho\circ u) = \nu^i\circ u D_\alpha u^i$ and $D_\alpha (\nu^i\circ u)D_\alpha u^i = \Hess\rho_u(D_\alpha u,D_\alpha u)$ a.e.\ in $U$, which shows that 
\begin{equation}
    \label{eq:EL2}
    \begin{aligned}
        \int_\Omega |D ( u + \e \vp\nu\circ u)|^2 \,dx &= \int_\Omega |Du|^2 \,dx \\
        &\quad + 2\e \int_{U\cap u^{-1}(M)} D(\rho\circ u)\cdot D\vp + \vp \Hess\rho_u(Du,Du)\,dx + o(\e). 
    \end{aligned}
\end{equation}
Recalling that $u + \e \vp\nu\circ u \in W^{1,2}(\Omega;\overline M)$ and that $\supp \vp \Subset U\cap u^{-1}(M)\subset\Omega$, the minimality of $u$ for $\cE_{\lambda,\gamma}$ now yields, along with \eqref{eq:EL1} and \eqref{eq:EL2}, that 
\begin{equation}
    \label{eq:EL3}
    \begin{aligned}
        \cE_{\lambda,\gamma}[u] &\leq \cE_{\lambda,\gamma}[u + \e\vp\nu\circ u] \\
        & \leq \cE_{\lambda,\gamma}[u] + 2 \e \int_{U\cap u^{-1}(M)} D(\rho\circ u)\cdot D\vp + \vp \Hess\rho_u(Du,Du) + \gamma \varphi \rho^{\gamma-1}\circ u\,dx + o(\e).
    \end{aligned}
\end{equation}
Hence, subtracting $\cE_{\lambda,\gamma}[u]$ from the left hand side and the rightmost side of \eqref{eq:EL3} and dividing the resulting inequality by $\e$ and letting $\e\to 0$ yields 
\begin{equation}
    \label{eq:EL4}
    \int_{U\cap u^{-1}(M)} D(\rho\circ u)\cdot D\vp + \vp \Hess\rho_u(Du,Du) + \gamma \varphi  \rho^{\gamma-1}\circ u\,dx \geq 0. 
\end{equation}
Finally, replacing $\vp$ with $-\vp$ turns the inequality in \eqref{eq:EL4} into an equality. As $\vp\in C_c^\infty(U\cap u^{-1}(M))$ was arbitrary, we arrive at the first line of \eqref{eq:dist-pde}.

\end{proof}

We can also compute the Euler-Lagrange system for the projection map $\Pi\circ u$.

\begin{lem}\label{lem:proj-pde}
    Let $u\in W^{1,2}(\Omega;\overline M)$ be a minimizing constraint map for $\cE_{\lambda,\gamma}$. If $u\in {B_{d_0}(\partial M)}$ a.e.  in a ball    $B\subset\Omega$, then $\Pi\circ u\in W^{1,2}(B;\partial M)$ and 
    \begin{equation}\label{eq:proj-pde}
    \Delta(\Pi\circ u) = -2 D(\rho\circ u)\cdot D(\nu\circ\Pi\circ u) + \Hess\Pi_u((Du)^\top,(Du)^\top)\quad\text{in } B,
    \end{equation}
    in the weak sense, where $(Du)^T$ is the tangential component of $D_\alpha u(x)$ with respect to the tangent hyperplane of $\partial M$.
\end{lem}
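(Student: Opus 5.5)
Since $u$ takes values in $B_{d_0}(\partial M)$ a.e.\ in $B$ and $\Pi\in C^\infty(B_{d_0}(\partial M);\partial M)$ has bounded derivatives there (shrinking $d_0$ slightly if needed), the chain rule for Sobolev maps gives $\Pi\circ u\in W^{1,2}(B;\partial M)$ with $D_\alpha(\Pi\circ u)=\nabla\Pi_u D_\alpha u$. The plan is to derive \eqref{eq:proj-pde} by testing minimality against \emph{tangential} variations that keep the distance $\rho\circ u$ fixed, so that the Alt--Phillips term drops out and we are reduced to a computation similar to the Dirichlet case treated in \cite{FGKS2}.

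Concretely, I write $u=\Pi\circ u+(\rho\circ u)\,\nu\circ\Pi\circ u$. For $\vp\in C_c^\infty(B;\R^m)$ and small $\e$, I set $w_\e:=\Pi(\Pi\circ u+\e\vp)$ (well defined for small $\e$, since $\Pi\circ u\in\partial M$ and $\vp$ is bounded) and
$$u_\e:=w_\e+(\rho\circ u)\,\nu\circ w_\e .$$
Then $\rho\circ u_\e=\rho\circ u\ge 0$, so $u_\e\in W^{1,2}_u(B;\overline M)$ and the term $\lambda(\rho\circ u_\e)^\gamma$ is unchanged. Minimality therefore gives $\frac{d}{d\e}\big|_{\e=0}\int_B|Du_\e|^2\,dx=0$. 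I expand $|Du_\e|^2=|Dw_\e|^2+2(\rho\circ u)Dw_\e:D(\nu\circ w_\e)+|D(\rho\circ u)|^2+(\rho\circ u)^2|D(\nu\circ w_\e)|^2$, using $\nu\perp Dw_\e$ and $\nu\perp D\nu$. The cross terms $D(\rho\circ u)\cdot D(\nu\circ w_\e)$ integrate against $\nu\circ w_\e$ and vanish because $\nu\cdot D\nu=0$.

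Next I differentiate in $\e$, with $\partial_\e w_\e|_0=\nabla\Pi_{\Pi\circ u}\vp=\vp^\top$. The term $|Dw_\e|^2$ yields the harmonic-map-type expression $\int D(\Pi\circ u):D\vp^\top$. The $\rho$-dependent terms produce contributions in which the Weingarten map $D\nu$ acts on $D(\Pi\circ u)$. These combine via the identity $\nabla\Pi_u = (I+\rho\, D\nu)^{-1}$-type relations; more precisely, $Du^\top = (I+\rho\,\nabla\nu)D(\Pi\circ u)$ on $T\partial M$. Integrating by parts then gives the weak form of
$$\Delta(\Pi\circ u)\cdot\vp^\top=\big(-2D(\rho\circ u)\cdot D(\nu\circ\Pi\circ u)+\Hess\Pi_u(Du^\top,Du^\top)\big)\cdot\vp^\top .$$

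It remains to handle the normal component. Because $\Pi\circ u\in\partial M$, I have $\nu\cdot D_\alpha(\Pi\circ u)=0$, hence $\nu\cdot\Delta(\Pi\circ u)=-D(\nu\circ\Pi\circ u):D(\Pi\circ u)$. This must be matched with the normal part of the right-hand side. The term $D(\rho\circ u)\cdot D\nu$ is tangential, and I expect the normal part of $\Hess\Pi_u(Du^\top,Du^\top)$ to reproduce exactly $-D\nu:D(\Pi\circ u)$, by differentiating $\nu(\Pi(y))\cdot\nabla\Pi(y)=0$. Alternatively, one can bypass the variation entirely: compute $\Delta(\Pi\circ u)=\nabla\Pi_u\Delta u+\Hess\Pi_u(Du,Du)$ using the Euler--Lagrange equation of $u$, where $\Delta u$ is normal on the contact set and carries the term $\lambda\gamma\rho^{\gamma-1}\nu$ in $u^{-1}(M)$. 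Then $\nabla\Pi_u\nu=0$ kills both, and $\Hess\Pi_u(\nu,\cdot)$ terms give $-2D(\rho\circ u)\cdot D(\nu\circ\Pi)$.

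The main obstacle is that this second route requires $\Delta u$ to be a measure or an $L^1$ function, which is unclear for $\gamma<1$ given only $u\in W^{1,2}$. That is why I will carry out the proof through the tangential variation above, using the direct route only as a guide for the algebra. The delicate step is the bookkeeping of the $\rho$-dependent terms and verifying the identities $\Hess\Pi(\nu,\nu)=0$ and $\Hess\Pi(\nu,\xi)=-\nabla\Pi\,D\nu\,\xi$-type relations. These should follow from differentiating $\Pi(y+t\nu)=\Pi(y)$.
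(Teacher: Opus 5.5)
Your strategy is the paper's: perturb $u$ only by variations that keep $\rho\circ u$ fixed, so the Alt--Phillips term is unchanged and only $\cE_0$-stationarity is used. The paper implements this with the flow $\Phi$ of $\nabla\Pi^i$, which is tangent to the level sets of $\rho$ because $\nabla\Pi\,\nu=0$, and takes $u_\e=\Phi(\e\vp,u)$ with scalar $\vp\in C_c^\infty(B)$. The variation field is $\vp\,\nabla\Pi^i(u)$, and by the chain rule $D_\alpha u\cdot\nabla\Pi^i(u)=D_\alpha(\Pi^i\circ u)$, so stationarity reads
\[
\int_B D(\Pi^i\circ u)\cdot D\vp+\vp\,\Hess\Pi^i_u(Du,Du)\,dx=0,
\]
with no tangential/normal splitting. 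Your variation field is instead $(I+\rho\,\nabla\nu)\vp^\top$, with $\rho=\rho\circ u$ and $\nabla\nu$, $\top$ taken at $\Pi\circ u$. So what you obtain first is a weighted equation. For example, with $\partial M$ the unit circle, $M$ its exterior and $u=re^{i\theta}$, it is $\operatorname{div}(r^2D\theta)=0$. To reach the test fields $\nabla\Pi_u\phi$ you must use the $u$-dependent choice $\vp=(I+\rho\,\nabla\nu)^{-2}\phi^\top\in W^{1,2}_0\cap L^\infty$. This is admissible, since your construction of $u_\e$ only needs $\vp$ bounded, but it has to be said.

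The step that genuinely fails is the last one. Made precise, your relation is $\Hess\Pi_y(\nu,\xi)=-\nabla\Pi_y\,\nabla\nu_{\Pi(y)}\nabla\Pi_y\xi$, obtained by differentiating $\nabla\Pi_y\,\nu(\Pi(y))=0$. Together with $\Hess\Pi(\nu,\nu)=0$ it gives
\[
\Hess\Pi_u(Du,Du)=\Hess\Pi_u\big((Du)^\top,(Du)^\top\big)-2\,\nabla\Pi_u\big(D(\rho\circ u)\cdot D(\nu\circ\Pi\circ u)\big).
\]
On tangent vectors $\nabla\Pi_u=(I+\rho\,\nabla\nu)^{-1}$, which is not the identity where $\rho\circ u>0$. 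So the claim that these terms ``give $-2D(\rho\circ u)\cdot D(\nu\circ\Pi)$'' drops a factor, and no bookkeeping recovers it. In the circle example one has $\rho\circ u=r-1$ and $\Pi\circ u=\nu\circ\Pi\circ u=e^{i\theta}$. There $\operatorname{div}(r^2D\theta)=0$ makes the tangential part of $\Delta(\Pi\circ u)$ equal to $-\tfrac{2}{r}(Dr\cdot D\theta)\,ie^{i\theta}$. The displayed right-hand side instead has $-2(Dr\cdot D\theta)\,ie^{i\theta}$, and the two differ wherever $r>1$ and $Dr\cdot D\theta\neq0$. What your argument actually proves, like the paper's flow argument, is $\Delta(\Pi\circ u)=\Hess\Pi_u(Du,Du)$ weakly in $B$. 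Equivalently, it is the display with $\nabla\Pi_u$ in front of the mixed term, and it coincides with the display only on $\{\rho\circ u=0\}$. This corrected form is the one the proof of Lemma \ref{lem:C1a-sup} uses, and there only boundedness of the right-hand side matters. Your normal-component check is correct.
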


\begin{proof}
Fix $i\in\{1,2,\cdots,m\}$, and consider the flow $\Phi \in C^1((-\eta,\eta) \times {B_{d_0}(\partial M)}; {B_{d_0}(\partial M)})$ generated by $\nabla \Pi^i$. Since $\nu\cdot\nabla\Pi^i = 0$ in ${B_{d_0}(\partial M)}$, we have $\rho\circ \Phi(t,y) = \rho(y)$ for all $(t,y)\in(-\eta,\eta)\times {B_{d_0}(\partial M)}$. 

Let 
$\vp\in C_c^\infty(B)$ be given, and take $\e> 0$ small enough such that $\e|\vp| < \eta$ in $B$. Then we take the variation of the form $u_\e := \Phi(\e\vp, u)\in W^{1,2}(B;\overline M)$. Note $u_\e - u \in W_0^{1,2}(B;\R^m)$. By the choice of the flow $\Phi$, we observe that $\rho\circ u_\e = \rho\circ u$ a.e.\ in $B$. Thus, $\cE_{\lambda,\gamma}[u]\leq \cE_{\lambda,\gamma}[\Phi(\e\eta,u)]$ implies $\cE_0[u] \leq \cE_0[\Phi(\e\eta,u)]$, and thus we can proceed as in \cite[Corollary 3.6]{FGKS1} to deduce \eqref{eq:proj-pde} (for each component $\Pi^i\circ u$, $i = 1,2,\cdots,m$). 
\end{proof}


\section{Partial regularity}\label{sec:part-reg}

This section is devoted to the proof of Theorem \ref{thm:e-reg}. Once this theorem is established, the (optimal) partial regularity theory, Corollary \ref{cor:partial}, follows from Theorem \ref{thm:e-reg-Ca}.

Our proof of Theorem \ref{thm:e-reg} follows, partly,  the classical approach of Giaquinta and Giusti \cite{GG}. The primary distinction in our setting is the choice of comparison maps: rather than harmonic functions, we employ $\cE_0$-minimizing constraint maps. This substitution ensures that the competitors remain compatible with the prescribed constraints. To this end, we establish the following maximum principle for $\cE_0$-minimizing constraint maps.

\begin{lem}
    \label{lem:max}
    There exists $\e_0 > 0$ small, depending only on $n$, $m$, and $\partial M$, such that for any minimizing constraint map $v\in W^{1,2}(B_1;\overline M)$ for $\cE_0$, if $ \int_{B_1}|Dv|^2\,dx \leq \e \leq \e_0$ and $\osc_{\partial B_1}(v-\ell) \leq \mu\sqrt\e$ for some $\mu > 1$ and some affine map $\ell :\R^n\to\R^m$ satisfying $|D\ell|\leq \mu\sqrt\e$, then 
    \begin{equation}
        \label{eq:max}
        \osc_{B_1} (v - \ell)\leq 2 \mu \sqrt \e.
    \end{equation}
\end{lem}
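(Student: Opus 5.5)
The plan is to compare $v$ with its affine approximation through a subharmonicity argument for $|v-\ell-a|^2$, where $a\in\R^m$ is a suitable constant. Set $w := v-\ell$. Since $\osc_{\partial B_1} w\le\mu\sqrt\e$, there is $a\in\R^m$ with $|w-a|\le\mu\sqrt\e$ on $\partial B_1$; a Jung-type center would do, but at worst take $a=w(p)$ for some $p\in\partial B_1$. It then suffices to prove
$$
S:=\sup_{B_1}|w-a|\le \mu\sqrt\e\,(1+o(1)),
$$
since then $\osc_{B_1}w\le 2S\le 2\mu\sqrt\e$ (after adjusting constants, or taking $a$ to be the center of the smallest enclosing ball). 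Boundary values are understood in the trace sense, with $\sup$ meaning ess sup. The interior regularity from Theorem~\ref{thm:e-reg-clean} (with $\e_0$ small) lets us compute pointwise in $B_1$.

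\textbf{Step 1 (differential inequality).} By \eqref{eq:v-EL}, $\Delta w=\Delta v=A_v(Dv,Dv)\chi_{v^{-1}(\partial M)}$, and $|A_v(Dv,Dv)|\le c_M|Dv|^2$. Hence, for $\phi:=|w-a|^2$,
$$
\Delta\phi = 2|Dw|^2 + 2(w-a)\cdot\Delta v \ge 2|Dw|^2 - 2c_M|w-a|\,|Dv|^2 .
$$
Using $|Dv|^2\le 2|Dw|^2+2|D\ell|^2$ and $|D\ell|\le\mu\sqrt\e$, wherever $|w-a|\le (2c_M)^{-1}$ this gives
$$
\Delta\phi\ge -4c_M\mu^2\e\,|w-a|\ge -4c_M\mu^2\e\, S .
$$
This holds in the weak sense on $B_1$, since $\phi\in W^{1,2}$ and the right-hand side of \eqref{eq:v-EL} is in $L^1$ with a sign-controlled product.

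\textbf{Step 2 (comparison).} Compare $\phi$ with $\psi(x)=\mu^2\e+\tfrac{2c_M\mu^2\e S}{n}(1-|x|^2)$, which satisfies $\Delta\psi = -4c_M\mu^2\e S$ and $\psi\ge\phi$ on $\partial B_1$. The weak maximum principle gives $S^2\le \mu^2\e+\tfrac{2c_M}{n}\mu^2\e S$. Solving this quadratic yields
$$
S\le \mu\sqrt\e\,\bigl(1+C\mu\sqrt\e\bigr),
$$
which is at most $2\mu\sqrt\e$ provided $\mu\sqrt\e\le 1/C$.

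\textbf{Step 3 (the smallness condition).} The main obstacle is justifying the standing assumption $S\le(2c_M)^{-1}$ used in Step 1, together with the smallness of $\mu\sqrt\e$ needed in Step 2.

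For the first point I would run a continuity argument over concentric balls $B_t$, $t\uparrow1$. On $B_{1/2}$ the $C^{1,1}$ bound \eqref{eq:e-reg-clean} controls $\osc w$ by $c\sqrt{\e_0}$. Near $\partial B_1$ I would use Luckhaus' small-energy H\"older estimate to transfer the boundary smallness inward. The inequality of Step 2 applied on $B_t$ shows that $\sup|w-a|$ can never reach the threshold $(2c_M)^{-1}$ once $\e_0$ is small.

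For the second point, when $\mu\sqrt\e$ is not small, the conclusion should instead follow from the convex-hull argument of Lemma~\ref{lem:bdd} applied to $\cE_0$. That argument shows $v\in\overline{\mathrm{co}}(v(\partial B_1)\cup M^c)$, and I would combine this with the fact that $v$ stays away from $M^c$ unless $v$ touches $\partial M$; in that case the curvature term is what Step 1 must absorb. If this splitting proves too delicate, the fallback is a compactness argument. Take a sequence with $\e_k\to0$ violating \eqref{eq:max}, rescale by $(\mu_k\sqrt{\e_k})^{-1}$, and pass to a limit in which the constraint flattens to a half-space or disappears. In that limit the map is a minimizing constraint map into a convex set, and the convex-hull argument of Lemma~\ref{lem:bdd} gives $\osc$ at most the boundary oscillation, which contradicts the factor $2$.
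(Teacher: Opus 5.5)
\textbf{Main route: conditionally correct, but both conditions are missing.} Your Steps~1--2 take a genuinely different, quantitative route: a barrier for $|v-\ell-a|^2$, rather than the paper's blow-up to a harmonic limit. They are correct \emph{conditionally}. If you already knew $S\le(2c_M)^{-1}$ and $\mu\sqrt\e\ll1$, the weak inequality and the comparison with $\psi$ (using a Jung-type center $a$) would give \eqref{eq:max}. The gap is that neither condition is available.

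The confinement $S\le(2c_M)^{-1}$ on all of $B_1$ is essentially the content of the lemma, and Step~3 does not supply it. Interior estimates (Theorem~\ref{thm:e-reg-clean}, Luckhaus, even combined with a trace Poincar\'e inequality) give at best $\sup_{B_t}|w-a|\le C_t\mu\sqrt\e$ with $C_t\to\infty$ as $t\uparrow1$. Running Step~2 on $B_t$ only puts $\sup_{\partial B_t}|w-a|$ back on the right-hand side. So nothing transfers the information on $\partial B_1$ to $\partial B_t$ uniformly in $t$. That would be a boundary regularity statement for $v$, which a $W^{1,2}$ map whose trace merely has small oscillation need not satisfy. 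Lemma~\ref{lem:bdd} does not help either, since $\overline{\mathrm{co}}(v(\partial B_1)\cup M^c)$ contains all of $M^c$.

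Independently, the lemma allows any $\mu>1$, so $\mu\sqrt\e$ need not be small. In that regime the threshold $|w-a|\le(2c_M)^{-1}$ can fail already on $\partial B_1$. The error $\frac{2c_M}{n}\mu^2\e S$, which comes from bounding $|D\ell|^2$ pointwise, is then no longer lower order. The convex-hull argument cannot cover this regime with the sharp constant: it bounds the range of $v$ rather than of $v-\ell$, and it brings in $\diam M^c$.

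\textbf{Fallback: the paper's route, but the wrong limit mechanism.} After subtracting $\ell_k$ and rescaling, the constraint on $w_k$ depends on $x$, namely $w_k(x)\in(\overline M-\ell_k(x)-y_k)/(\mu_k\sqrt{\e_k})$. Since $\mu_k$ is arbitrary, $\mu_k\sqrt{\e_k}$ need not tend to $0$, so nothing forces the constraint to flatten. Transferring minimality to a weak $W^{1,2}$-limit would also need a compactness theorem for minimizers under varying constraints, which you do not have. The missing idea is the one the paper uses. From $\Delta v_k=A_{v_k}(Dv_k,Dv_k)\chi_{v_k^{-1}(\partial M)}$ and $\int_{B_1}|Dv_k|^2\le\e_k$ one gets $\bigl|\int_{B_1}Dw_k:D\vp\,dx\bigr|\le\kappa\mu_k^{-1}\sqrt{\e_k}\,\sup|\vp|\to0$. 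Hence the limit $w$ is harmonic in every regime. The maximum principle applied to $w$ itself, so the affine part is harmless, then gives $\osc_{B_1}w\le\osc_{\partial B_1}w\le1$.

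\textbf{A remaining step in either version.} You must justify that $\osc_{B_1}w_k\ge2$ survives in the limit. Almost-everywhere convergence only yields $\osc_{B_1}w\le\liminf_k\osc_{B_1}w_k$, so some uniform control of $w_k$, including near $\partial B_1$, is needed. The paper's proof also passes over this step quickly.
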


\begin{proof}
Suppose by way of contradiction that the conclusion is not true. Then there exists a sequence $\{v_k\}_{k=1}^\infty\subset W^{1,2}(B_1;\overline M)$ of minimizing constraint maps for $\cE_0$ such that 
\begin{equation}
    \label{eq:Dvk-L2}
    \int_{B_1} |Dv_k|^2\,dx \leq \e_k,\quad\text{and}\quad \osc_{\partial B_1} (v_k - \ell_k) \leq \mu_k\sqrt{\e_k},
\end{equation}
for some constants $\e_k\to 0$,  $\mu_k > 1$ and some affine map $\ell_k:\R^n\to\R^m$ with $|D\ell_k|\leq \mu_k\sqrt{\e_k}$, yet 
\begin{equation}
    \label{eq:vk-B1}
    \osc_{B_1} (v_k - \ell_k) \geq 2\mu_k\sqrt{\e_k},
\end{equation}
for every $k\in\N$. Rescale the target and set 
$$
w_k := \frac{v_k - \ell_k - y_k}{\mu_k\sqrt{\e_k}},
$$

where $y_k$ is chosen as the integral average of $v_k - \ell_k$ over $B_1$. By the Poincar\'e inequality and the fractional trace inequality \cite[Chapter 10, Theorem 18.1, Remark 18.2]{DBb}, we have   
$$\| w_k \|_{W^{\frac{1}{2},2}(\partial B_1)}^2 \leq c \| w_k \|_{W^{1,2}(B_1)}^2 \leq \frac{c}{\mu_k^2\e_k} \int_{B_1} |Dv_k|^2 + |D\ell_k|^2 \,dx < c,$$
where the last inequality follows from $\mu_k > 1$ and \eqref{eq:Dvk-L2}, and $c > 1$ here is a generic constant that depends at most on $n$. Hence, it follows $w_k\rightharpoonup w$ in $W^{1,2}(B_1;\R^m)\cap W^{\frac{1}{2},2}(\partial B_1;\R^m)$ for some $w\in W^{1,2}(B_1;\R^m)$ along a subsequence that we do not attempt to relabel. Extracting a further subsequence if necessary, we also obtain that $w_k \to w$ a.e.\ in $B_1$ and ($\cH^{n-1}$-)a.e.\ on $\partial B_1$. Thus, it follows from \eqref{eq:vk-B1} that 
\begin{equation}
    \label{eq:w-osc}
\osc_{B_1} w \geq 2 > 1 \geq \osc_{\partial B_1} w.
\end{equation}
However, since $\Delta v_k = A_{v_k}(Dv_k,Dv_k)\chi_{v_k^{-1}(\partial M)}$ in $B_1$ in the weak sense, denoting by $\kappa$ the uniform curvature bound for $\partial M$, we observe that for all $\vp\in W_0^{1,2}\cap L^\infty(B_1;\R^m)$, 
$$
\bigg|\int_{B_1} Dw_k : D\vp \,dx \bigg| \leq \frac{\kappa}{\mu_k\sqrt{\e_k}} \int_{B_1} |\vp||Dv_k|^2\,dx \leq \frac{\kappa\sqrt{\e_k}}{\mu_k} \sup_{B_1} |\vp|  \to 0,
$$
where the second inequality was deduced from \eqref{eq:Dvk-L2} and $\mu_k > 1$. Passing to the limit in the above inequalities, we observe that 
$$
\int_{B_1} Dw: D\vp\,dx = 0,\quad\forall \vp\in W_0^{1,2}\cap L^\infty(B_1;\R^m),
$$
which implies that $\Delta w = 0$ in $B_1$. Therefore, applying the maximum principle to each component of $w$, we arrive at a contradiction in \eqref{eq:w-osc}.
\end{proof}

We shall first prove an interior $C^{1,\alpha}$-estimate which is optimal for $\gamma \in (0,1)$, and suboptimal for $\gamma \in[1,2)$. 

\begin{lem}
    \label{lem:C1a-sub}
    Let $u\in W^{1,2}(B_4;\overline M)$ be a minimizing constraint map for $\cE_{\lambda,\gamma}$ with $\|\rho\circ u\|_{L^\infty(B_4)} \leq d$. Given $\sigma\in(0,1)$, there exist $\e > 0$ and $c>1$, both depending only on $n$, $m$, $\partial M$, $\lambda$,  $\gamma$, and $\sigma$, such that if $4^{2-n}\int_{B_4}|Du|^2\,dx \leq \e$ and ${\lambda} d^\gamma \leq \e$, then $u\in C^{1,\alpha}(B_1)$ with $\alpha = \min\{\frac{\gamma}{2-\gamma},\sigma\}$ and  
    $$
    \| Du \|_{C^{0,\alpha}(B_1)} \leq c.
    $$
\end{lem}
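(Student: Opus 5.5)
The plan is to run a Campanato-type comparison argument in the spirit of Giaquinta--Giusti, using $\cE_0$-minimizing constraint maps as competitors. Fix a ball $B_s(y)$ with $y\in B_1$ and $s\le 1$, and let $v\in W^{1,2}_u(B_s(y);\overline M)$ minimize $\cE_0$ in this Dirichlet class. Such a $v$ exists by the direct method and satisfies $\cE_0[v]\le\cE_0[u]$.

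\textbf{Step 1: smallness at every scale.} By Lemma~\ref{lem:monot}, for every $y\in B_1$ and $s\le 2$ we have
$$
s^{2-n}\int_{B_s(y)}|Du|^2 \le c\,\e + c\lambda d^\gamma \le c\,\e .
$$
Theorem~\ref{thm:e-reg-Ca} (Luckhaus' $\e$-regularity) then gives, for any fixed $\beta<1$, the Morrey decay
$$
\int_{B_s(y)}|Du|^2 \le c\,s^{n-2+2\beta}.
$$
It also gives $u\in C^{0,\beta}(B_2)$, so that $\osc_{B_s(y)}u\le c s^\beta$.

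\textbf{Step 2: energy comparison between $u$ and $v$.} Since $v$ also has small normalized energy, Theorem~\ref{thm:e-reg-clean}, specifically \eqref{eq:camp}, supplies the Campanato decay of $Dv$ at every exponent $<1$. For the difference we exploit minimality of $u$ against $v$:
$$
\int_{B_s}|Du|^2-|Dv|^2 \le 2\lambda\int_{B_s}\big((\rho\circ v)^\gamma-(\rho\circ u)^\gamma\big).
$$
To convert this into a bound on $\int|D(u-v)|^2$ we use the variational inequality for $v$. Since $\Delta v = A_v(Dv,Dv)\chi_{v^{-1}(\partial M)}$ by \eqref{eq:v-EL}, we get
$$
\int|D(u-v)|^2 = \int|Du|^2-|Dv|^2 - 2\int Dv:D(u-v) \le 2\lambda\int\big((\rho\circ v)^\gamma-(\rho\circ u)^\gamma\big) + c\int|u-v||Dv|^2 .
$$
The second term is bounded exactly as in \eqref{eq:v-h}: Lemma~\ref{lem:max}, applied after rescaling with $\ell$ the affine approximation, gives $\sup|u-v|\le c s^\beta$, which together with the Morrey bound yields $c s^{n-2+3\beta}$.

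\textbf{Step 3: the key term, which is the main obstacle.} We must show that $\lambda\int_{B_s}(\rho\circ v)^\gamma$ is of order $s^{n+2\alpha}$ with $\alpha=\gamma/(2-\gamma)$. The crude bound $\lambda d^\gamma s^n$ only gives $C^{1,0}$-type control. The idea is to use Alt--Phillips scaling. Since $\rho$ is $1$-Lipschitz, $(\rho\circ v)^\gamma \le (\rho\circ u+|u-v|)^\gamma$, so it suffices to prove that $\rho\circ u\le c s^{2/(2-\gamma)}$ near points where $\rho\circ u$ is small, i.e.\ the optimal growth away from the free boundary. For this we would use the Euler--Lagrange equation for the distance map, Lemma~\ref{lem:dist-pde}: this is formally available once $u\in C^1$, so we first run the argument with a weaker exponent and then bootstrap. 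The equation gives $\Delta(\rho\circ u)\ge \lambda\gamma(\rho\circ u)^{\gamma-1}-c|Du|^2$, and the standard Alt--Phillips nondegeneracy/growth barrier argument then yields $\rho\circ u\le c\,r^{2/(2-\gamma)}$ at distance $r$ from $\{\rho\circ u=0\}$. Away from the contact set the equation is uniformly elliptic with a bounded right-hand side, so at points with $\rho\circ u\ge c_0 s^{2/(2-\gamma)}$ we may instead compare directly. If this growth argument proves too delicate for $\gamma<1$ (where $(\rho\circ u)^{\gamma-1}$ is singular), the fallback is to estimate $\int(\rho\circ v)^\gamma-(\rho\circ u)^\gamma\le \int|u-v|^\gamma$. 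Then $\sup|u-v|\le c s^{1+\alpha'}$, obtained inductively from the current $C^{1,\alpha'}$ estimate and Lemma~\ref{lem:max}, gives the improvement $\alpha'\mapsto \gamma(1+\alpha')/2$, whose fixed point is exactly $\gamma/(2-\gamma)$; this capping is consistent with the claimed exponent.

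\textbf{Step 4: conclusion.} Combining the previous steps with \eqref{eq:camp} for $v$ at exponent $\sigma'\in(\alpha,1)$, we obtain
$$
\int_{B_r(y)}|Du-(Du)_{y,r}|^2 \le c\Big(\frac rs\Big)^{n+2\sigma'}\int_{B_s(y)}|Du-(Du)_{y,s}|^2 + c\,s^{n+2\alpha_k}.
$$
The iteration lemma \cite[Lemma 2.2]{GG} then gives a Campanato bound at exponent $\alpha_k$, and iterating the improvement $\alpha_{k+1}=\min\{\gamma(1+\alpha_k)/2,\sigma\}$ finitely many times (or up to $\sigma$) reaches $\alpha=\min\{\gamma/(2-\gamma),\sigma\}$. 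Campanato's characterization finally yields $\|Du\|_{C^{0,\alpha}(B_1)}\le c$.
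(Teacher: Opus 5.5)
Your architecture is the same as the paper's: comparison with an $\cE_0$-minimizing constraint map $v$, control of $\int|u-v||Dv|^2$ via Luckhaus' Morrey/H\"older bounds and Lemma~\ref{lem:max}, the decay \eqref{eq:camp} for $v$, the iteration lemma, and a bootstrap. The gap is in Step~3, which is exactly where the exponent $\gamma/(2-\gamma)$ must come from.

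Your primary route does not work as stated. In general $\lambda\int_{B_s}(\rho\circ v)^\gamma$ is not $O(s^{n+2\alpha})$: if $u$ stays at positive distance from $\partial M$ on $B_s$, it is of order $s^n$. Only the difference $(\rho\circ v)^\gamma-(\rho\circ u)^\gamma$ can be small. Moreover, Lemma~\ref{lem:dist-pde} needs $u\in C^1$ with values in $B_{d_0}(\partial M)$, which is not assumed. The growth bound $\rho\circ u\le c\,r^{2/(2-\gamma)}$ for this coupled equation is asserted, not proved.

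Your fallback starts from the right inequality; for $\gamma>1$ you should use the Lipschitz bound $c|u-v|$ rather than $|u-v|^\gamma$. But estimating $\int_{B_s}|u-v|^\gamma\le c\,s^n\sup_{B_s}|u-v|^\gamma\le c\,s^{n+\gamma(1+\alpha_k)}$ gives the recursion $\alpha_{k+1}=\gamma(1+\alpha_k)/2$. For it, $\frac{\gamma}{2-\gamma}-\alpha_{k+1}=\frac{\gamma}{2}\bigl(\frac{\gamma}{2-\gamma}-\alpha_k\bigr)$. Starting below the fixed point, $\alpha_k$ increases to $\gamma/(2-\gamma)$ but never attains it. So when $\gamma<1$ and $\sigma\ge\gamma/(2-\gamma)$ — precisely the case where the lemma asserts the optimal exponent — you only get $C^{1,\beta}$ for every $\beta<\gamma/(2-\gamma)$. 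In that regime $\e$ and $c$ degenerate as $\beta\uparrow\gamma/(2-\gamma)$, and your claim that finitely many steps reach $\alpha$ is false.

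The missing idea is to avoid passing through $\sup|u-v|$ for this term and to absorb it instead. Since $u-v\in W^{1,2}_0(B_s)$, H\"older and Sobolev give $\int_{B_s}|u-v|^{\gamma}\le c\bigl(\int_{B_s}|D(u-v)|^2\bigr)^{\gamma/2}s^{\,n-\gamma(n-2)/2}$. Young's inequality with exponents $2/\gamma$ and $2/(2-\gamma)$ then bounds this by $\frac12\int_{B_s}|D(u-v)|^2+c\,s^{\,n+2\gamma/(2-\gamma)}$. The first piece is absorbed into the left-hand side of your comparison inequality, which yields the sharp exponent in a single step; this is the paper's estimate \eqref{eq:I}. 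The bootstrap is then needed only for $\int|u-v||Dv|^2$. Its recursion $\alpha\mapsto(1+\alpha-2\delta)/2$ has fixed point $1-2\delta>\sigma$, so it terminates after finitely many steps.
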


\begin{proof}
Fix ${\e}$ as a small constant, to be determined later, at most by the parameters $n$, $m$, $\partial M$, $\lambda$, $\gamma$ and $\sigma$. Also fix any $\sigma\in(0,1)$, and set $\alpha = \min\{\frac{\gamma}{2-\gamma},\sigma\}$ as in the statement. We shall write by $c > 1$ a generic constant that may vary at each occurrence.

Let $u\in W^{1,2}(B_4;\overline M)$ be a minimizing constraint map for $\cE_{\lambda,\gamma}$, and suppose the assumptions in the statement of the lemma is fulfilled, i.e., 
\begin{equation}
    \label{eq:Du-L2-e0}
    4^{2-n}\int_{B_4} |Du|^2\,dx \leq {\e}, \quad \|\rho\circ u\|_{L^\infty(B_4)}\leq d,\quad\text{and}\quad {\lambda} d^\gamma\leq {\e}.  
\end{equation} 

Fix ${R} \in (0,1)$ as well as $x_0\in   B_2$, and let $v \in W_u^{1,2}(B_{R}(x_0);\overline M)$ be a minimizing constraint map for $\cE_0$. Then by  the energy-minimality of $v$, the inequalities \eqref{eq:Du-L2-e0} and almost monotonicity (Lemma \ref{lem:monot}) we have 
\begin{equation}
    \label{eq:Dv-L2-e0}
    R^{2-n}\int_{B_R(x_0)} |Dv|^2\,dx \leq R^{2-n}\int_{B_R(x_0)} |Du|^2\,dx \leq 3^{2-n} \int_{ B_3} |Du|^2\,dx + c{\lambda} d^\gamma \leq c{\e}.
\end{equation}
Thus, with ${\e}$ small, depending only on $n$, $m$, and $\partial M$, it follows from Theorem \ref{thm:e-reg-clean}, especially \eqref{eq:camp} with $\bar\alpha := \min\{\frac{\gamma}{2-\gamma},\frac{1}{2}(1 +\sigma)\}$ which verifies $\alpha < \bar\alpha < 1$, that  
\begin{equation}
    \label{eq:v-C11-re}
    \begin{aligned}
        \int_{B_r(x_0)} |Dv - (Dv)_{x_0,r}|^2\,dx &\leq c\bigg(\frac{r}{R/2}\bigg)^{n+2\bar\alpha}\int_{B_{R/2}(x_0)} |Dv - (Dv)_{x_0,{R}}|^2\,dx \\
        &\quad+ cr^{n+2\bar\alpha}\int_{B_R(x_0)} |Dv|^2\,dx, \\
        &\leq c\bigg(\frac{r}{R}\bigg)^{n+2\bar\alpha}\int_{B_{R}(x_0)} |Dv - (Dv)_{x_0,{R}}|^2\,dx \\
        &\quad+ cr^{n+2\bar\alpha}\int_{B_R(x_0)} |Dv|^2\,dx,
    \end{aligned}
 \end{equation} 

whenever $0<r\leq R$. Now utilizing the $\cE_{\lambda,\gamma}$-minimality of $u$, as well as the fact that $v$ is a weak solution to $\Delta v = A_v(Dv,Dv)\chi_{v^{-1}(\partial M)}$ in $B_{R}(x_0)$, we see that
\begin{equation}
    \label{eq:Eu-Ev}
    \begin{aligned}
    \int_{B_{R}(x_0)} |D(u-v)|^2\,dx & = \int_{B_{R}(x_0)} |Du|^2 - |Dv|^2 + 2(u- v)\cdot A_v(Dv,Dv)\chi_{v^{-1}(\partial M)}\,dx\\
    &\leq {\lambda} \int_{B_{R}(x_0)} (\rho\circ v)^\gamma - (\rho\circ u)^\gamma\,dx + c \int_{B_{R}(x_0)} |u-v| |Dv|^2\,dx.
    \end{aligned}
\end{equation}
Since we assume that $\rho\circ u \leq d$ a.e.\ in $B_4$ (hence in $B_R(x_0)$), it follows from the subharmonicity of $\rho\circ v$ (see \cite[Lemma 3.2]{FGKS2}) and $u = v$ on $\partial B_R(x_0)$ that $\rho\circ v \leq d$ a.e.\ in $B_R(x_0)$. Since $\rho$ is the signed distance function to a smooth hypersurface $\partial M$, we have 
$$
\esssup_{B_d(\partial M)}|\nabla\rho| \leq 1, 
$$
so we deduce from the H\"older, Sobolev and Young inequality, as in \cite[Page 246]{GG2}, that
\begin{equation}
    \label{eq:I}
    \begin{aligned}
    \int_{B_{R}(x_0)} (\rho\circ v)^\gamma - (\rho\circ u)^\gamma\,dx &\leq c \int_{B_{R}(x_0)} |u-v|^{\min\{\gamma,1\}}\,dx \\
    &\leq c \bigg[\int_{B_{R}(x_0)} |u-v|^{\frac{2n}{n-2}}\,dx \bigg]^{\frac{\gamma(n-2)}{2n}} {R}^{n-\frac{\gamma(n-2)}{2}} \\
    &\leq c \bigg[\int_{B_{R}(x_0)} |D(u-v)|^2 \bigg]^{\frac{\gamma}{2}} {R}^{n-\frac{\gamma(n-2)}{2}} \\
    &\leq \frac{1}{2}\int_{B_{R}(x_0)} |D(u-v)|^2\,dx + c {R}^{n + 2\min\{\frac{\gamma}{2-\gamma},1\}}.
    \end{aligned}
\end{equation}
To estimate the last term in the second line of \eqref{eq:Eu-Ev}, let $\alpha_0\in(0,1)$ and $\delta > 0$ be some constants to be determined later by $\sigma$ alone. Choose ${\e}$ to be smaller, if necessary, for now depending only  on $n$, $m$, $\partial M$, $\lambda$, $\gamma$, $\alpha_0$ and $\delta$ (the dependence on $d$ is absorbed by that of $\lambda$ and $\gamma$ due to \eqref{eq:Du-L2-e0}), such that by \cite[Proposition 1]{L}, the small energy assumption in \eqref{eq:Du-L2-e0} implies $|Du|\in L^{2,n-2\delta}(B_1(x_0))$, and $u\in C^{0,\alpha_0}(B_1(x_0))$, with the estimate 
\begin{equation}
    \label{eq:u-Ca}
\| Du\|_{L^{2,n-2\delta}(B_1(x_0))}^2 + [u]_{C^{0,\alpha_0}(B_1)}^2 \leq c\int_{B_4} |Du|^2\,dx.
\end{equation}

Recalling the energy minimality of $v$ in $W_u^{1,2}(B_R(x_0);\overline M)$, we obtain from \eqref{eq:u-Ca} that 
\begin{equation}
    \label{eq:Dv-L2-re}
\int_{B_R(x_0)} |Dv|^2\,dx \leq \int_{B_R(x_0)} |Du|^2\,dx \leq cR^{n-2\delta}\int_{B_4}|Du|^2\,dx.
\end{equation}
Moreover, by \eqref{eq:Dv-L2-e0} (as well as the fact that $v = u$ on $\partial B_R(x_0)$), we can also employ Lemma \ref{lem:max} (here we may also need to take ${\e}$ smaller if needed) to deduce that  

\begin{equation}
    \label{eq:v-osc}
    \osc_{B_R(x_0)} v \leq 2 \bigg[\osc_{\partial B_R(x_0)} u\bigg] \leq 2 \bigg[\osc_{B_R(x_0)} u \bigg] \leq 2 
    c R^{\alpha_0}.
\end{equation}
Utilizing \eqref{eq:Dv-L2-re} and \eqref{eq:v-osc}, we estimate the rightmost term in \eqref{eq:Eu-Ev} as 
\begin{equation}
    \label{eq:II}
\int_{B_R(x_0)} |u-v||Dv|^2\,dx \leq \esssup_{B_R(x_0)}|u-v| \int_{B_R(x_0)} |Dv|^2\,dx \leq c R^{n-2\delta + \alpha_0}\int_{B_4} |Du|^2\,dx.
\end{equation}

Now let $\sigma \in(0,1)$ be  given. At this point, we can select $(\alpha_0,\delta)$ accordingly such that $\frac{1}{2}(1+\sigma) = \alpha_0 - 2\delta$ (e.g., by choosing $\delta = \frac{1}{8}(1-\sigma)$ and $\alpha_0 = \frac{1}{4}(3 + \sigma)$). Then by putting \eqref{eq:I} and \eqref{eq:II} together, and by choosing $\alpha_1:= \min\{\frac{\gamma}{2-\gamma},\frac{1}{4}(1+\sigma)\} < \min\{\frac{\gamma}{2-\gamma},1\}$, we obtain from \eqref{eq:Eu-Ev} that 
\begin{equation}
    \label{eq:Eu-Ev-re}
    \int_{B_{R}(x_0)} |D(u-v)|^2 \,dx \leq c{R}^{n + 2\alpha_1} \bigg(\int_{B_4}|Du|^2\,dx + 1\bigg) \leq cR^{n+2\alpha_1}
\end{equation}
Finally, we deduce from \eqref{eq:v-C11-re}, \eqref{eq:Dv-L2-re}, and \eqref{eq:Eu-Ev-re} that 
$$
    \begin{aligned}
        &\int_{B_r(x_0)} |Du - (Du)_{x_0,r}|^2\,dx \\
        &\leq 2\int_{B_r(x_0)} |Dv - (Dv)_{x_0,r}|^2 + 2 |D(u-v)|^2\,dx \\
        & \leq c \bigg(\frac{r}{R}\bigg)^{n+2\bar\alpha} \int_{B_{R}(x_0)}|Dv - (Dv)_{x_0,{R}}|^2\,dx+ cr^{n+2\bar\alpha}\int_{B_R(x_0)}|Dv|^2\,dx + c\int_{B_{R}(x_0)} |D(u - v)|^2\,dx\\
        & \leq c   \bigg(\frac{r}{R}\bigg)^{n+2\bar\alpha} \int_{B_{R}(x_0)} |Du - (Du)_{x_0,{R}}|^2\,dx + c{R}^{n + 2\alpha_1}.
    \end{aligned}
$$
whenever $0<r<R<1$.
Recall that $\bar\alpha = \min\{\frac{\gamma}{2-\gamma},\frac{1}{2}(1+\sigma)\}$, which satisfies $\alpha_1< \bar\alpha < 1$. Thus, it follows from the iteration lemma, c.f. \cite[Lemma 2.2]{GG}, that 
\begin{equation}
    \label{eq:Du-C1a}
    \begin{aligned}
    \int_{B_r(x_0)} |Du - (Du)_{x_0,r}|^2\,dx &\leq c \bigg(\frac{r}{R}\bigg)^{n+2\alpha_1} \int_{B_{R}(x_0)} |Du - (Du)_{x_0,{R}}|^2\,dx  + c  r^{n+2\alpha_1}, 
    \end{aligned}
\end{equation}
whenever $0<r<R<1$ and $x_0\in   B_2$. In particular, taking $R \uparrow 1$, and utilizing Jensen's inequality that yields $|(Du)_{x_0,1}|^2 \leq \fint_{B_1(x_0)} |Du|^2\,dx \leq c\int_{B_4} |Du|^2\,dx$, we observe from \eqref{eq:Du-C1a} that 
$$
\int_{B_r(x_0)} |Du - (Du)_{x_0,r}|^2\,dx \leq c r^{n+2\alpha_1},
$$ 

for all $r\in(0,1)$ and all $x_0\in   B_2$. Therefore, $u\in C^{1,\alpha_1}(  B_2)$ with the estimate
\begin{equation}
    \label{eq:Du-C1a-re}
\| Du\|_{C^{0,\alpha_1}(  B_2)}^2\leq c.
\end{equation}

Now if $\alpha_1 \geq \alpha = \min\{\frac{\gamma}{2-\gamma},\sigma\}$, then the proof is finished. Otherwise, that is, if $\alpha_1 < \alpha$, we perform a bootstrapping argument as follows. Due to \eqref{eq:Du-C1a-re} and \eqref{eq:Du-L2-e0}, we obtain  
\begin{equation}
    \label{eq:u-l}
\sup_{\partial B_R(x_0)}| u - \ell| \leq c R^{1+\alpha_1},
\end{equation}

for every $R \in (0,1)$, where $\ell$ is the affine part of $u$, i.e., $\ell(x_0) = u(x_0)$ and $D\ell = Du(x_0)$. Thus, we can update the oscillation estimate \eqref{eq:v-osc} of $v$ over $B_R(x_0)$, via Lemma \ref{lem:max} again, to  
\begin{equation}
    \label{eq:v-l}
    \sup_{B_R(x_0)} |v- \ell| \leq 2\bigg[\sup_{\partial B_R(x_0)} |u - \ell| \bigg] \leq 2cR^{1+\alpha_1}.
\end{equation} 
By \eqref{eq:u-l} and \eqref{eq:v-l}, we can improve \eqref{eq:II} as
\begin{equation}
    \label{eq:II-re}
    \int_{B_R(x_0)} |u-v||Dv|^2\,dx \leq  c R^{n  - 2\delta + 1 + \alpha_1}\int_{B_4}|Du|^2\,dx. 
\end{equation}
Thus, by choosing $\delta = \frac{1}{2}(1-\sigma)$, we can further improve \eqref{eq:Eu-Ev-re} by replacing $\alpha_1$ with $\alpha_2 = \min\{\frac{\gamma}{2-\gamma},\frac{3}{8}(1+\sigma)\}$, which then results a corresponding improvement in \eqref{eq:Du-C1a}.

Iterating the above argument $k$-times (which requires the smallness of ${\e}$ to be also dependent on $k$), we obtain \eqref{eq:Du-C1a} with $\alpha_k = \min\{\gamma/(2-\gamma),\frac{1}{2}\theta_k(1+\sigma)\}$ in place of $\alpha_1$, for some strictly increasing sequence $\theta_k\to 1$. In particular, we reach the optimal exponent $\alpha_k \geq \alpha = \min\{\gamma/(2-\gamma),\sigma\}$ within a finite time of iterations. This completes the proof. 
\end{proof}

We are left with the optimal regularity estimate for the case $\gamma\in[1,2)$. Recall from Section \ref{sec:prelim} that $d_0 > 0$ is a small constant, determined solely by the smoothness of $\partial M$, such that $\rho\in C^\infty(B_{d_0}(\partial M))$. 

\begin{lem}
    \label{lem:C1a-sup}
    Assume $\gamma \in [1,2)$, and let $u\in W^{1,2}(\Omega;\overline M)$ be a minimizing constraint map for $\cE_{\lambda,\gamma}$. If $u\in C^1(U; B_{d_0}(\partial M))$ for a subdomain $U\subset\Omega$, then $|D^2u| \in L_\loc^\infty(U)$. Moreover, with $\lambda_0 \geq 1$ chosen such that $(\diam U)\| Du\|_{L^\infty(U)} \leq \lambda_0$, one has, for every ball $B\Subset U$,
    $$
    \| D^2 u \|_{L^\infty(B)} \leq  c\norm{Du}_{L^\infty(U)}^2 + c\lambda \gamma (d_0)^{\gamma-1} 
    $$

    where $c>1$ depends only on $n$, $m$, $\partial M$, $\lambda$, $\gamma$,  $\lambda_0$, $d_0$, $\dist(B, \partial U)$.  
\end{lem}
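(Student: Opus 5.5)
Since $u$ takes values in $B_{d_0}(\partial M)$ on $U$, I would write $u=\Pi\circ u+(\rho\circ u)\,\nu\circ\Pi\circ u$ there. It then suffices to bound the second derivatives of the scalar function $w:=\rho\circ u$ and of the projection $\Pi\circ u$ separately. Both pieces solve the equations already derived. By Lemma \ref{lem:dist-pde}, $w\in C^1(U)$, $w\ge 0$, and
$$
\Delta w = \Hess\rho_u(Du,Du)+\lambda\gamma w^{\gamma-1} \quad\text{in } U\cap\{w>0\},\qquad w=|Dw|=0 \text{ on } U\cap\{w=0\}.
$$
By Lemma \ref{lem:proj-pde}, $\Pi\circ u$ solves \eqref{eq:proj-pde} in every ball in $U$.

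\textbf{Step 1: the distance part.} Since $\gamma\ge1$ and $w\le d_0$, the source satisfies $0\le\lambda\gamma w^{\gamma-1}\le\lambda\gamma d_0^{\gamma-1}$. Also $|\Hess\rho_u(Du,Du)|\le c\|Du\|_{L^\infty(U)}^2$, since $\rho$ is smooth on $B_{d_0}(\partial M)$. So on the open set $\{w>0\}$, $\Delta w=f$ with $f$ bounded by $c\|Du\|_\infty^2+\lambda\gamma d_0^{\gamma-1}$. I expect to need $f$ to be at least H\"older continuous. This holds because $u\in C^{1,\alpha}$ by Lemma \ref{lem:C1a-sub}, and $w^{\gamma-1}$ is H\"older continuous for $\gamma\ge1$.

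The set $\{w=0\}$ is where $w$ attains its minimum with vanishing gradient, so this is an obstacle-type (no-sign or classical obstacle) problem. I would first check that $w$ solves $\Delta w=f\chi_{\{w>0\}}$ weakly in all of $U$. This should follow from $w\in C^1$ and $|Dw|=0$ on $\{w=0\}$, integrating by parts against $(w-\epsilon)^+$ truncations. I would then apply the standard $C^{1,1}$ estimate for such problems: quadratic growth $\sup_{B_r(x_0)}w\le Cr^2$ from free boundary points (Caffarelli's argument / Harnack applied to $w$ rescaled), combined with interior Schauder estimates on balls $B\subset\{w>0\}$ of radius comparable to the distance to $\{w=0\}$. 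Together these give $\|D^2w\|_{L^\infty(B)}\le C(\|f\|_\infty+\sup_U w\,/\dist(B,\partial U)^2)$. This is the main obstacle. The right-hand side $f\chi_{\{w>0\}}$ is discontinuous across the free boundary and $f$ depends on $u$ itself, and $f\ge -c\|Du\|^2$ need not have a sign. So I would use the no-sign obstacle theory, where the $C^{1,1}$ bound needs only $f$ bounded and Dini/H\"older continuous. The quadratic growth at points where $w=|Dw|=0$ follows from a dyadic $L^\infty$ comparison with $\|f\|_\infty r^2$.

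\textbf{Step 2: the projection part.} In \eqref{eq:proj-pde}, the term $\Hess\Pi_u((Du)^\top,(Du)^\top)$ is bounded by $c\|Du\|_\infty^2$. The term $D w\cdot D(\nu\circ\Pi\circ u)$ is bounded by $c\|Du\|_\infty^2$ as well. Moreover, both terms are H\"older continuous, since $u\in C^{1,\alpha}$ and $\Pi,\nu$ are smooth. Interior Schauder (or $W^{2,p}$ plus Morrey) estimates then give $\|D^2(\Pi\circ u)\|_{L^\infty(B)}\le c\|Du\|_\infty^2+c\lambda_0$-dependent terms. Here $\lambda_0$ controls the rescaling by $\diam U$ and the H\"older norms.

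\textbf{Step 3: combining.} Differentiating $u=\Pi\circ u+w\,\nu\circ\Pi\circ u$ twice, the product rule gives
$$
|D^2u|\le |D^2(\Pi\circ u)|+|D^2w|+2|Dw||D(\nu\circ\Pi\circ u)|+w|D^2(\nu\circ\Pi\circ u)|.
$$
Each term on the right is bounded by Steps 1--2, yielding the stated bound $c\|Du\|_{L^\infty(U)}^2+c\lambda\gamma d_0^{\gamma-1}$ with the claimed dependence of $c$.
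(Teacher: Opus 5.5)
Your plan follows the paper's route. You split $u=\Pi\circ u+(\rho\circ u)\,\nu\circ\Pi\circ u$, treat $w=\rho\circ u$ as an obstacle-type problem $\Delta w=f\chi_{\{w>0\}}$, handle $\Pi\circ u$ by linear elliptic theory, and recombine. For $w$ the paper simply invokes the no-sign obstacle theory. Your Harnack-plus-Schauder argument is more elementary, and it is legitimate precisely because $w\ge 0$. Two steps, however, do not go through as written.

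\textbf{The $C^{1,\alpha}$ input.} Lemma \ref{lem:C1a-sub} does not apply here. It needs the small-energy hypotheses $4^{2-n}\int_{B_4}|Du|^2\le\e$ and $\lambda d^\gamma\le\e$, which this lemma does not assume. You could recover them on small rescaled balls, but that would have to be said and tracked. You do not need it: once $\Delta w$ and $\Delta(\Pi\circ u)$ are known to be bounded in $U$, $W^{2,p}$ estimates for all $p<\infty$ give $w,\Pi\circ u\in C^{1,\alpha}_\loc$. That yields the H\"older continuity of both right-hand sides, and it is how the paper proceeds. Also, in Step 2 only the Schauder option gives $D^2(\Pi\circ u)\in L^\infty$. $W^{2,p}$ plus Morrey gives only $C^{1,\alpha}$; the paper goes up to $W^{3,p}$.

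\textbf{The equation across the free boundary.} Your truncation argument for $\Delta w=f\chi_{\{w>0\}}$ weakly in all of $U$ yields only one inequality. Test the equation on $\{w>0\}$ with $\varphi\,\eta_\epsilon(w)$, where $\varphi\ge0$ and $\eta_\epsilon$ cuts off $\{w>\epsilon\}$ (your $(w-\epsilon)^+$ variant behaves the same). This leaves the term $\int\varphi\,\eta_\epsilon'(w)|Dw|^2\ge0$. Knowing $w\in C^1$ with $Dw=0$ on $\{w=0\}$ does not make it vanish; you would need $|Dw|^2\lesssim w$, which is essentially the $C^{1,1}$ bound you are after. So you only get $\Delta w\ge f\chi_{\{w>0\}}$.

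The missing ingredient is minimality. For $\varphi\ge0$, $u+\epsilon\varphi\,\nu\circ\Pi\circ u$ is an admissible competitor: it stays in $\overline M\cap B_{d_0}(\partial M)$ and raises $\rho$ by exactly $\epsilon\varphi$. The one-sided first variation then gives $\Delta w\le\Hess\rho_u(Du,Du)+\lambda\gamma w^{\gamma-1}$ in $U$, reading $w^{\gamma-1}\equiv1$ when $\gamma=1$. The two bounds give $\Delta w\in L^\infty_\loc(U)$, so $w\in W^{2,p}_\loc$. Since $D^2w=0$ a.e.\ on $\{Dw=0\}$, you get $\Delta w=f\chi_{\{w>0\}}$ a.e.

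The paper skips this point as well; it reads $|\Delta(\rho\circ u)|\in L^\infty(U)$ directly off \eqref{eq:dist-pde}. With these repairs, your Steps 1--3 complete the proof, provided $D^2(\nu\circ\Pi\circ u)$ in Step 3 is expanded through $D^2(\Pi\circ u)$ rather than $D^2u$.
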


\begin{proof}
    Since $u\in C^1(U;B_{d_0}(\partial M))$, Lemma \ref{lem:dist-pde} yields $\rho\circ u\in C^1(U)$. Moreover, since $\gamma \geq 1$, $(\rho\circ u)^{\gamma-1}\in C(U)$, whence it follows from \eqref{eq:dist-pde}, the assumption $|Du|\in L^\infty(U)$, and $\rho\in C^\infty(B_{d_0}(\partial M))$ (which along with $u \in C^1(U; B_{d_0}(\partial M))$ yields 
    $\Hess\rho_u(Du,Du) \in C(U)$), that $|\Delta (\rho\circ u)| \in L^\infty(U)$. Thus, the elliptic regularity estimate implies $\rho\circ u \in W_\loc^{2,p}(U)$ for every $p\in(1,\infty)$. 

    Similarly, since $\Pi\in C^\infty(B_{d_0}(\partial M);\partial M)$, and $\nu\in C^\infty(\partial M;\Ss^{m-1})$, we observe from $u\in C^1(U;B_{d_0}(\partial M))$ and Lemma \ref{lem:proj-pde} that $|\Delta(\Pi\circ u)| \in L^\infty(U)$. Therefore, we also have $\Pi\circ u \in W_\loc^{2,p}(U;\partial M)$ for every $p\in (1,\infty)$.

    Combining the above observations together, we deduce (from $\id = \Pi + \rho \nu\circ \Pi$ in $B_{d_0}(\partial M)$, c.f. \cite[Eq.\ (2.1)]{FGKS1}) that $u \in W_\loc^{2,p}(U)$ for every $p \in (1,\infty)$. To this end, we may argue as in the proof of \cite[Lemma A.3--A.4]{FKS} (with a very minor modification) to deduce that $u\in C_\loc^{1,1}(U)$.

    More precisely, from elliptic theory for $\rho \circ u$, $\Pi \circ u$ we have that $u\in C^{1,\alpha}(B)$. Hence, we have from \cite[(4.2)]{FKS}
\begin{equation*}
    \Delta (\Pi \circ u) = \Hess \Pi_u ((Du), (Du)) = f_v \in W^{1,p}(B) \quad \text{in} \; B.
\end{equation*}
Hence, $\Pi\circ u \in W^{3,p}_{loc}(B)$ for any $p<\infty$, and in particular $C^{1,1}_{loc}(B)$ (by Sobolev embedding for $p \geq n$)
\begin{equation*}
    \norm{\Pi \circ u}_{C^{1,1}(B)} \leq c \norm{\Pi\circ u}_{W^{3,p}(B)} \leq c \norm{f_v}_{W^{1,p}(B)}.
\end{equation*}
And for the distance function we have that ($\gamma \in [1,2)$)
\begin{equation*}
    \begin{split}
        \Delta (\rho \circ u) = \Hess\rho_u(Du,Du) + \lambda\gamma(\rho\circ u)^{\gamma-1} \\= (\Hess\rho_u(Du,Du) + \lambda\gamma(\rho\circ u)^{\gamma-1})\chi_{\{\rho \neq 0\}} = f_1 \in C^{0,\sigma}_{loc} (B).
    \end{split}
\end{equation*}
According to the result for the no-sign free boundary problem \cite{ALS} we obtain that $\rho \circ u \in C^{1,1}(B_{1/2})$. More precisely, if $f_1 = \Delta v$ (since $f_1 = 0$ in $\{\rho = 0\}$)
\begin{equation*}
    \norm{D^2 (\rho \circ u)}_{L^{\infty}(B_{1/2})} \leq c (\norm{\rho \circ u}_{L^1(B)} + \norm{D^2 v}_{L^\infty(B)}).
\end{equation*}

    
    The interior $C^{1,1}$-estimate  can be deduced by keeping track of the regularity estimate at each step above. 
\end{proof}

We are now ready to prove the $\e$-regularity theorem.

\begin{proof}
    [Proof of Theorem \ref{thm:e-reg}]
    The proof follows from Lemmas \ref{lem:C1a-sub} and \ref{lem:C1a-sup}, along with a standard covering argument. 
    First, note that the condition $\lambda d^\gamma \leq \varepsilon$ from Lemma 3.2 is utilized in \eqref{eq:Dv-L2-e0} where the almost monotonicity formula is applied. Consequently, when carrying out the proof on a ball of radius $R$, we obtain the assumption $R^2 \lambda d^\gamma \leq \varepsilon$.

    To obtain the first estimate in Theorem \ref{thm:e-reg} from \eqref{eq:Du-C1a-re}, we can use the rescaling $\Tilde{u} (x) = u (\frac{x}{R})$ where $x \in B_R$. Then $D \Tilde{u} (x) = \frac{1}{R} Du (y) \Big |_{y = \frac{x}{R}}$, and the estimate follows.

To obtain the second estimate we combine the estimate in Lemma \ref{lem:C1a-sup} with the one in Lemma \ref{lem:C1a-sub}, and the factor $R^2$ comes from scaling.

\end{proof}


\section*{Acknowledgements}
This project was     supported by  Henrik Shahgholian’s grant from  the Swedish Research Council (grant no.~2021-03700). I would like to express my sincere gratitude to Sunghan Kim for proposing the research problem and providing critical guidance throughout the development of the ideas and results presented in this paper.






\end{document}